\newcommand{\ot}{\otimes}
\newcommand{\Q}{\mathbb{Q}}
\newcommand{\CP}{\mathbb{C}P}
\newcommand{\field}{\mathds{k}}
\renewcommand{\L}{\mathbb L}
\newcommand{\Aa}{\mathcal{A}}
\newcommand{\Oo}{\mathcal O}
\newcommand{\g}{\mathfrak g}
\newcommand{\kk}{\Bbbk}
\newcommand{\Com}{{\mathcal Com}}
\newcommand{\Lie}{{\mathcal Lie}}
\DeclareMathOperator{\id}{id}
\DeclareMathOperator{\map}{map}
\DeclareMathOperator{\aut}{aut}
\DeclareMathOperator{\Hom}{Hom}
\DeclareMathOperator{\Der}{Der}
\DeclareMathOperator{\MC}{MC}
\DeclareMathOperator{\im}{im}
\newtheorem{thm}{Theorem}[section]
\newtheorem{prop}[thm]{Proposition}
\newtheorem{lemma}[thm]{Lemma}
\theoremstyle{definition}
\newtheorem{dfn}[thm]{Definition}
\newtheorem{exmp}[thm]{Example}
\newtheorem{rmk}[thm]{Remark}
\thanks{
J. Cirici acknowledges the Serra H\'{u}nter Program the AEI (CEX2020-001084-M and PID2020-117971GB-C22). This work was also partially supported by the Departament de Recerca i Universitats de la Generalitat de Catalunya (2021 SGR 00697) and the ANR-20-CE40-0016 HighAGT. B. Saleh acknowledges the support by the Knut and Alice Wallenberg Foundation through grant no. 2019.0521. 
}
\address[J. Cirici]{Departament de Matemàtiques i Informàtica, Universitat de Barcelona\\
Gran Via 585\\
08007 Barcelona, Spain  / Centre de Recerca Matemàtica, Edifici C, Campus Bellaterra, 08193 Bellaterra, Spain}
\email{jcirici@ub.edu}
\address[B. Saleh]{Department of Mathematics, Stockholm University, SE-106 91 Stockholm, Sweden}
\email{basharsaleh1@gmail.com}
\begin{document}
\title[Weight decompositions for mapping spaces and homotopy automorphisms]{Weight decompositions on algebraic models\\ for  mapping spaces and homotopy automorphisms} 
\author{Joana Cirici and Bashar Saleh} 
\date{}

\begin{abstract}
We obtain restrictions on the rational homotopy types of mapping spaces and of classifying spaces of homotopy automorphisms by means of the theory of positive weight decompositions. The theory applies, in particular, to connected components of holomorphic maps between compact Kähler manifolds as well as homotopy automorphisms of Kähler manifolds.
\end{abstract}
\maketitle
\section{Introduction}
Weight decompositions on algebraic objects are often useful to limit the range of the structure maps involved, leading to non-trivial homotopical consequences.
A main source of weight decompositions is Deligne's functorial theory of mixed Hodge structures on the rational cohomology of any complex algebraic variety.
Part of the data of a mixed Hodge structure is the weight filtration, an increasing filtration defined at the level of rational vector spaces.
There is a functorial weight decomposition on the Sullivan minimal model of the underlying complex analytic space of any complex algebraic variety in such a way that, in cohomology, it recovers the graded pieces of Deligne's weight filtration. Moreover, for a smooth variety
 the weight decomposition on the model is positive (see \cite{morgan78}; for the functorial statement, see also \cite{CG1} or Theorem 4.9 of \cite{bcc}). In the case of smooth projective varieties or, more generally, compact Kähler manifolds, weights are not only positive but actually pure, a property that is equivalent to having formality \cite{DGMS}. In particular, one may think about the property of having positive weights as a weaker situation than formality, but still homotopically restrictive.

Positive weights on general commutative differential graded algebras were first considered by Body and Douglas in \cite{BD1} and \cite{BD2}, who showed that positive weight decompositions 
lead to unique factorizations of rational homotopy types.
In the related work \cite{BMSS}, Body, Mimura, Shiga and Sullivan used positive weights in order to characterize $p$-universal spaces. We refer to 
\cite{AL},  \cite{MaWe},  \cite{bcc} and \cite{Manin} for more recent applications of positive weights in various geometric and topological contexts.

In an effort to further understand the homotopical consequences of having positive weights, in this paper we deploy weight-graded basic results in homotopy theory.
In particular, we show that if a differential graded algebra carries positive weights and has finite-dimensional cohomology, then 
there is a transferred structure on the cohomology $H$ and some finite number $k\geq 3$ such that the higher $A_\infty$-operations $\mu_n\colon H^{\ot n}\to H$ vanish for $n\geq k$. The same argument shows that the $n$-tuple Massey products on cohomology are all trivial for sufficiently large $n$.

A topological space $X$ is said to have \textit{positive weights} if it admits a model in the sense of Sullivan with a positive weight decomposition: there is a quasi-isomorphism of commutative differential graded algebras $A\to \Aa_{pl}(X)$ such that $A=\bigoplus_{p\geq 0} A_p$ admits a non-negative weight-grading compatible with the differential and products, and such that $A_0$ is concentrated in degree 0. Here $\Aa_{pl}(X)$ denotes Sullivan's algebra of piece-wise linear forms on $X$. Equivalently, we might ask for a Lie model in the sense of Quillen with a negative weight decomposition. Note that formality and coformality both imply positive weights. 
The main goal of this paper is to study the behaviour of weights on the rational homotopy types of mapping spaces and homotopy automorphisms. 
We prove:

\begin{thm}\label{first}
Let $X$ be of the homotopy type of a finite CW-complex and let $Y$ be a connected nilpotent space of finite $\Q$-type having positive weights. Let $\Psi\colon X\to Y$ denote a constant map. Then the connected component of $\map(X,Y)$ which contains $\Psi$,  has positive weights.
\end{thm}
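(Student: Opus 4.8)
The plan is to exhibit an explicit Lie model of the component $\map(X,Y)_\Psi$ carrying a negative weight decomposition; by the equivalence recalled in the introduction --- positive weights on a Sullivan model amount to negative weights on a Quillen model --- this yields the statement. Since $Y$ has positive weights it admits a Quillen model $\g=\bigoplus_{p<0}\g_p$ with \emph{strictly negative} weights: the differential preserves weights, $d\g_p\subseteq\g_p$, and the bracket is weight-additive, $[\g_p,\g_q]\subseteq\g_{p+q}$. Because $Y$ has finite $\Q$-type we may take $\g$ of finite type, and because $X$ has the homotopy type of a finite CW-complex it admits a commutative model $A$ which, $X$ being a finite complex, we may take finite-dimensional and concentrated in non-negative degrees. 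I would then invoke the classical description of the component of a constant (equivalently, null-homotopic) map, in the spirit of Haefliger's work and of the Lie models for mapping spaces of Buijs--F\'elix--Murillo--Tanr\'e: $\map(X,Y)_\Psi$ is modeled by the DGL $\g\otimes A$, with differential $d_\g\otimes\id\pm\id\otimes d_A$ and bracket $[x\otimes a,\,y\otimes b]=\pm[x,y]\otimes ab$, possibly after a standard adjustment in degree $0$ accounting for $\pi_0$ and $\pi_1$ (the adjustment being vacuous when $Y$ is simply connected). It is crucial that $\Psi$ be constant: for a non-constant $f\colon X\to Y$ the corresponding component is modeled by the same DGL with differential twisted by a Maurer--Cartan element $\tau_f$, which need not be weight-homogeneous, and the argument below would collapse.

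The second step is to transport the weight decomposition to $\g\otimes A$ by declaring $A$ to sit entirely in weight $0$, i.e.\ $(\g\otimes A)_p:=\g_p\otimes A$. One checks immediately that this is a direct-sum decomposition, that the weights remain strictly negative, that each weight piece is of finite type (as $A$ is finite-dimensional and $\g$ of finite type), that the differential preserves it (both $d_\g\otimes\id$ and $\id\otimes d_A$ do), and that the bracket is weight-additive (multiplication in $A$ stays in weight $0$). Hence $\g\otimes A$ is a DGL with a negative weight decomposition.

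It then remains to verify that the degree-$0$ adjustment required when $Y$ is nilpotent but not simply connected is compatible with this decomposition. That adjustment replaces $(\g\otimes A)_0$ by a weight-homogeneous sub- or quotient object --- for instance $\ker\bigl(d\colon(\g\otimes A)_0\to(\g\otimes A)_{-1}\bigr)$, or the corresponding truncation --- and since $d$ preserves weights, such a kernel (sub-DGL, quotient) inherits the grading, $\ker d_0=\bigoplus_p\ker\bigl(d_0|_{\g_p\otimes A}\bigr)$. Therefore the adjusted DGL still carries a negative weight decomposition, and $\map(X,Y)_\Psi$ has positive weights.

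The real work, as opposed to the idea, lies in the bookkeeping for $Y$ merely nilpotent: one must confirm that a finite-dimensional model $A$ of $X$ and a finite-type Quillen model $\g$ of $Y$ exist, that $\g\otimes A$ together with its low-degree adjustment genuinely computes the rational homotopy type of the constant component of $\map(X,Y)$, and that this adjustment is weight-homogeneous. Each of these reduces to the single point that every structure map in play --- the two summands of the differential, the bracket, and the inclusion of the adjusted sub-DGL --- respects the weight grading that $\g\otimes A$ inherits for free from $\g$. One further compatibility to record against the paper's conventions is that a negative weight decomposition on a Quillen model is exactly dual to a positive weight decomposition on a Sullivan model --- in particular that strictness of the negativity corresponds to the weight-$0$ part of the Sullivan model being concentrated in degree $0$ --- which is precisely the equivalence asserted in the introduction.
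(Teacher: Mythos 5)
Your proposal is correct and follows essentially the same route as the paper: choose a finite-dimensional cdga model $A$ of $X$ with trivial weights and a Lie ($L_\infty$) model of $Y$ with negative weights, observe that the constant map corresponds to the trivial (weight-zero) Maurer--Cartan element so no twisting is needed, and take the degree-zero truncation of $A\ot L$, which inherits a negative weight decomposition; the paper does exactly this via the weight-graded version of Berglund's mapping-space model, where your ``degree-$0$ adjustment'' is the connected cover $\langle 0\rangle$. The only cosmetic differences are your citation of Haefliger/Buijs--F\'elix--Murillo--Tanr\'e in place of Berglund and your phrasing ``strictly negative weights'' (for nilpotent, non-simply connected $Y$ the degree-zero part of the Lie model sits in weight $0$, as in the paper's Definition of a negative weight decomposition), neither of which affects the argument.
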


We actually prove a more general result for not necessarily constant maps satisfying a weight-preserving property (see Proposition \ref{posweightsgeneral}) which in the context of complex geometry leads to  the following result:

\begin{thm}\label{second}
Let $f:X\to Y$ be a holomorphic map between compact Kähler manifolds, where $Y$ is nilpotent.
Then the connected component of the mapping space $map(X,Y)$
which contains $f$, has positive weights.
\end{thm}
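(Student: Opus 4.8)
The plan is to obtain Theorem \ref{second} as an application of Proposition \ref{posweightsgeneral}, so that the task reduces to checking that a holomorphic map between compact Kähler manifolds has the weight-preserving property required there. First I would record the standing inputs. A compact Kähler manifold has the homotopy type of a finite CW-complex and is of finite $\Q$-type; by \cite{DGMS} it is formal, and its rational cohomology carries a pure Hodge structure with $H^k$ of weight $k$. Hence its Sullivan minimal model admits a \emph{pure}, and in particular positive, weight decomposition in which a generator of cohomological degree $k$ lies in weight $k$, with the weight-zero part concentrated in degree $0$. Thus both $X$ and $Y$ satisfy the hypotheses of Theorem \ref{first} and Proposition \ref{posweightsgeneral} (the nilpotence of $Y$ being part of the data), and in particular $Y$ has positive weights; only the compatibility of $f$ with these weight gradings remains.

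The key step is to observe that $f$ is a \emph{formal map}. The argument of \cite{DGMS} proving formality of a compact Kähler manifold $Z$ runs through the zig-zag of commutative differential graded algebras connecting the de Rham algebra of $Z$ to $(H^*_{\dR}(Z),0)$ via the subalgebra of $d^c$-closed forms and the $dd^c$-lemma. A holomorphic map preserves $d$ and $d^c$, so this entire zig-zag is natural in $f$; after the standard descent from $\R$ to $\Q$ one obtains a commutative diagram, with quasi-isomorphisms as vertical arrows, connecting $\Aa_{pl}(f)$ to the graded algebra map $H^*(f;\Q)\colon (H^*(Y;\Q),0)\to(H^*(X;\Q),0)$. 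Since a morphism of graded algebras automatically carries the weight decomposition of its source (which here is the cohomological degree) into that of its target, $f$ admits a model that strictly respects the pure positive weight decompositions on the formal models of $X$ and $Y$ — exactly the weight-preserving hypothesis of Proposition \ref{posweightsgeneral}.

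Feeding this into Proposition \ref{posweightsgeneral} then gives that the connected component of $\map(X,Y)$ containing $f$ has positive weights, which is the assertion. I expect the only delicate point, and where the real content sits, to be the second step: one must check that the whole DGMS zig-zag — not merely its two endpoints — can be chosen functorially in the holomorphic map, and that the real-to-rational descent can be carried out compatibly with this naturality. Alternatively one may invoke the functoriality of the weight decomposition on minimal models as in \cite{CG1} and \cite{bcc}, but for Kähler manifolds that are not projective this requires the Kähler refinement of that statement rather than its algebraic version. Once the weight-preserving property is in place, the passage through Proposition \ref{posweightsgeneral} is immediate.
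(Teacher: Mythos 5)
Your proposal is correct and follows essentially the same route as the paper: the paper proves Theorem \ref{second} by combining Proposition \ref{posweightsgeneral}\textit{(a)} with Example \ref{examkahlermaps}, i.e.\ by noting that a holomorphic map of compact Kähler manifolds is a formal map, so that it is modelled by the weight-preserving morphism $f^*\colon H^*(Y;\Q)\to H^*(X;\Q)$ for the formality induced (pure) weights, giving a weight-zero Maurer--Cartan element in $\MC(H^*(X;\Q)\ot L_Y)$. Your additional remarks on the naturality of the DGMS zig-zag and the real-to-rational descent simply flesh out the formal-map claim that the paper takes as known.
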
 
The above result is actually true for any formal map between formal topological spaces.
For an arbitrary map of complex algebraic varieties we also obtain restricted weights, but these are not necessarily positive. Such weights may be interpreted as part of a mixed Hodge structure on the models for mapping spaces of complex algebraic varieties. 
Note that the mapping spaces we are considering are purely topological (no algebraic condition except for the fact that we pick a connected component of an algebraic map) and so have a priory no reason to carry mixed Hodge structures. Therefore our results exhibit in some sense the motivic nature of mapping spaces.

In \cite{LuSm}, Lupton and Smith show that for a formal space, the universal cover of the classifying space of the space of its homotopy automorphisms has positive weights. We extend this result to the coformality and pointed setting:

\begin{thm}\label{third}
Let $X$ be a simply connected space of the homotopy type of a finite CW-complex. If $X$ is formal or coformal, then the universal covers of $B\aut(X)$ and $B\aut_{*}(X)$ have positive weights.
\end{thm}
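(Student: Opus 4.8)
The plan is to realize the universal covers in question as realizations of derivation Lie algebras and to control the weight decompositions by the degree. Recall (Schlessinger--Stasheff, Sullivan, Tanré; see also \cite{LuSm} and the references therein) that if $M$ is a minimal model of $X$ — a Sullivan algebra, a free Lie model, or more generally any cofibrant $C_\infty$- or $L_\infty$-model — then the positive-degree truncation $\Der^{+}(M)$ of its derivation Lie algebra is a model for $\widetilde{B\aut_*(X)}$, and the quotient $\Der^{+}(M)/\ad(M)$ by the inner derivations is a model for $\widetilde{B\aut(X)}$; equivalently these fit into a short exact sequence of Lie models dual to the fibration $X\to\widetilde{B\aut_*(X)}\to\widetilde{B\aut(X)}$. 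It therefore suffices, in each of the two cases, to exhibit such a model $M$ of $X$ whose weight decomposition is so tightly bounded by the degree that $\Der^{+}(M)$ carries a strictly negative weight decomposition; the conclusion then follows from the characterization of positive weights in terms of negatively weighted Lie models.

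I would first treat the formal case. Take $M=H^{*}(X;\Q)$ with the cup product $\mu_2$, all higher operations $\mu_n=0$ for $n\ge3$, and the pure weight decomposition $\mathrm{wt}=\deg$. The $C_\infty$-derivation complex of $M$ is a Lie model for $\widetilde{B\aut_*(X)}$; its differential is the Harrison differential attached to $\mu_2$, which preserves $\mathrm{wt}=\deg$, and its bracket adds weights, so it is a weight-decomposed Lie algebra. The key point is that a homogeneous component $H^{\otimes k}\to H$ of a $C_\infty$-derivation of homological degree $j\ge1$ has cohomological degree $-(j+k-1)$, hence — since weight equals degree — shifts weights by $-(j+k-1)\le -1$; as all of these values are strictly negative, $\Der^{+}(M)$ inherits a strictly negative weight decomposition, and so $\widetilde{B\aut_*(X)}$ has positive weights. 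For $\widetilde{B\aut(X)}$ one passes to the quotient by the inner derivations (equivalently, uses the above fibration): $\ad$ sends a class of weight $p$ in the homotopy Lie algebra of $X$ to a derivation shifting weights by $p$, hence is weight-preserving, the homotopy Lie algebra of a formal space is negatively weighted, and a quotient of a negatively weighted Lie algebra by a weight-graded subalgebra is again negatively weighted.

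For the coformal case, take $M=(\mathbb{L}(W),\delta)$ the minimal Quillen model, which is cofibrant and, by coformality, has $\delta$ quadratic. Give $W$ the weight decomposition $\mathrm{wt}(w)=-(\,|w|+1\,)$ and extend it so that brackets add weights, i.e.\ a bracket monomial of bracket length $k$ and homological degree $m$ has weight $-(m+k)$. Since $\delta$ is quadratic it preserves this decomposition (both $w$ and $\delta w$ land in weight $-(\,|w|+1\,)$), the bracket adds weights by construction, and simple connectivity forces every element to have weight $\le-2$. The same mechanism applies: a homogeneous derivation of homological degree $j\ge1$ sends a generator of homological degree $m$, of weight $-(m+1)$, into homological degree $m+j$, where all weights are $\le-(m+j+1)$, so it shifts weights by $\le -j<0$. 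Hence $\Der^{+}(M)$ is strictly negatively weighted and $\widetilde{B\aut_*(X)}$ has positive weights; the quotient by $\ad(M)$, with $M$ negatively weighted and $\ad$ weight-preserving, yields positive weights for $\widetilde{B\aut(X)}$.

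The issues to be handled carefully are two, neither a serious obstacle. First, "the positive-degree truncation" must be realized as an honest sub-Lie-algebra, $\bigl(\bigoplus_{j\ge2}\Der_j(M)\bigr)\oplus\ker\!\bigl([\delta,-]\colon\Der_1(M)\to\Der_0(M)\bigr)$, and this truncation together with the passage to outer derivations must be shown to live inside the category of weight-decomposed Lie algebras; this is routine because all maps in sight are weight-preserving. Second, and this is the one genuinely delicate point, in the formal case one should \emph{not} use the minimal Sullivan model $(\Lambda V,d)$: its natural positive weight decomposition coming from the bigraded model of \cite{morgan78} is not pure, and $\Der^{+}(\Lambda V,d)$ may carry spurious weight-zero classes; replacing it by the $C_\infty$-derivations of $H^{*}(X)$, which compute the same rational homotopy type, is exactly what forces the weights to be strictly negative. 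Dually, in the coformal case one uses honest derivations of the cofibrant quadratic Quillen model and no such subtlety occurs.
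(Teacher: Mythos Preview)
Your overall strategy---bound the weights on the derivation Lie model directly, rather than via a spectral sequence---is sound, and in the coformal case it goes through essentially as you wrote it. The gap is in the framework you invoke at the outset.

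The statement ``for any cofibrant model $M$, the truncation $\Der^{+}(M)$ models $\widetilde{B\aut_*(X)}$ and $\Der^{+}(M)/\ad(M)$ models $\widetilde{B\aut(X)}$'' is Tanr\'e's theorem and is valid only on the Lie side. On the commutative side the identification is different: for a Sullivan (or $C_\infty$-) model $M$ one has that $\Der(M)\langle -1\rangle$ models the \emph{unpointed} $B\aut_\circ(X)$, and there is no ``quotient by inner derivations'' step available---$M$ is commutative, so $\ad(M)=0$. Thus in your formal case you have actually established positive weights for $\widetilde{B\aut(X)}$, not $\widetilde{B\aut_*(X)}$, and your passage to the other space via $\Der^{+}(M)/\ad(M)$ is vacuous. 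The pointed case in the formal situation is therefore missing from your argument.

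The paper handles this by using \emph{both} minimal models in \emph{both} cases: $\Der(\Lambda V)\langle -1\rangle$ for $B\aut_\circ(X)$ and $\Der(\mathbb{L}W)\langle -1\rangle$ for $B\aut_{*,\circ}(X)$, each equipped with the formality- or coformality-induced weights, and then runs a spectral sequence $E_1^{t,s}=\Hom(V^{-t},H^s(\Lambda V))$ (and its Lie analogue) to show the cohomology lives in negative weights. Your concern that $\Der^+(\Lambda V)$ ``may carry spurious weight-zero classes'' is legitimate at the chain level, but the spectral sequence argument shows precisely that no such classes survive: the $E_1$-page is already concentrated in weights $\leq s+t<0$. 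Incidentally, your direct style of argument can be made to cover the missing case as well: applying it to the formal Quillen model $\mathbb{L}(s\bar H^\vee)$ (where generators satisfy $w=|\cdot|-1$ and brackets add weights, so every element has $w<|\cdot|$) yields negative weights on $\Der(\mathbb{L}W)\langle -1\rangle$ by the same reasoning you used in the coformal paragraph.
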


Again, in the case when the initial space is a complex algebraic variety, the above result may be understood as a manifestation of a mixed Hodge theory inherited by homotopy automorphisms.

We also endow algebraic models for algebraic constructions related to the free and based loop spaces with weights inherited from weight decompositions on the original space.

\medskip 

We briefly explain the structure of this paper.
The algebraic models of mapping spaces turn out to be more accessible through Quillen's approach to rational homotopy, via differential graded Lie or $L_\infty$-algebras. With this in mind, we first review the theory of weights in this setting and show how weights are naturally transferred between the Sullivan and Quillen approaches to rational homotopy theory. We do this in Section 2, where we also show that the main constructions in rational homotopy (such as minimal models and homotopy transfer theory) generalize to the weight-graded setting, complementing work of Douglas \cite{Douglas}. 
Section 3 is devoted to the study of particular properties of weights. We review the notions of purity and of having positive weights. We also consider an intermediate situation where weights are restricted to live in a certain segment. All of these restrictions lead to non-trivial homotopical consequences. We also review two main sources of weights: the weight decompositions induced from formality and coformality respectively.
In Section 4 we prove the main theorems of this paper on mapping spaces and classifying spaces of homotopy automorphisms.

\subsection*{Acknowledgments}We would like to thank 
 Geoffroy Horel and Alexander Berglund   for useful discussions and suggestions.
Also, thanks to
José Moreno, Aniceto Murillo and Daniel Tanré for answering our questions on completed Lie algebras.

\subsection*{Conventions}
We will be using a cohomological convention even for the dgla's, and we reserve the lower index for weights. In particular, Lie models, homology, and homotopy groups of spaces will be concentrated in negative cohomological degrees.

\section{Weights in rational homotopy}
In this section we review the notion of weight decomposition on a 
differential graded algebra and promote basic homotopical constructions to the weight-graded context. For practical purposes, we focus on the commutative and Lie settings, although a general operadic approach is also possible. 

\subsection{Weight-graded algebras}
All algebras in this section will be over a field $\field$ of characteristic zero. 
Recall the following definition from \cite{morgan78}, \cite{BD1}.
\begin{dfn}\label{Def_cdga_weight}
Let $(A,d,\cdot)$ be a non-negatively graded dg-algebra over $\kk$.
A \textit{weight decomposition} of $A$
is a  direct sum decomposition
\[A^n=\bigoplus_{p\in\mathbb{Z}} A_p^n\]
of each vector space $A^n$, such that:
\begin{enumerate}
 \item $dA^n_p\subseteq A^{n+1}_p$ for all $n\geq 0$ and all $p\in\mathbb{Z}$.
 \item $A_p^n\cdot A_{q}^{m}\subseteq A_{p+q}^{n+m}$ for all $n,m\geq 0$ and all $p,q\in\mathbb{Z}$.
\end{enumerate}
Given $x\in A^n_p$ we will denote by $|x|=n$ its \textit{cohomological degree} and by $w(x)=p$ its \textit{weight}.
\end{dfn}

\begin{dfn}A weight decomposition on a cdga $A$ is said to be \textit{positive (negative)} if $A^0$ has weight 0 and $A^i$ is concentrated in positive  (negative) degrees for all $i\neq 0$.
\end{dfn}

It follows from the definition that  the cohmology of $A$ has an induced weight decomposition by letting $H^n(A)_p:= H^n(A_p)$.
Also, if $A$ is unital weight-graded, then the unit $1\in A$ is of weight-zero.

\begin{rmk}
The above definition has its obvious adaptation to other operadic algebras.
In the case of a dg Lie algebra (dgla), to have a weight decomposition one just asks that both the differential and the Lie bracket preserve weights, while for dg-coalgebras 
the differential and coproduct should preserve the weights.
In the case of $A_\infty$, $C_\infty$ or $L_\infty$ algebras, 
weight decompositions should be preserved by all structure maps  $\mu_n\colon A^{\ot n}\to A$.
\end{rmk}

\begin{dfn}\label{dualizing}
The dual of a finite type commutative dg-coalgebra (cdgc) $C$ with a weight decomposition gives a commutative dg-algebra (cdga) $C^\vee$ with a weight decomposition, by letting $(C^\vee)_p:=(C_{-p})^\vee$. The same is true for the passage from finite type cdga's to cdgc's.
\end{dfn}

\subsection{Chevalley-Eilenberg and Quillen constructions}\label{subsec:CE-Quillen-constr}
Under some connectivity hypotheses, there is an adjunction between cdgc's and dgla's,
given by the Chevalley-Eilenberg and Quillen constructions
(see for instance \cite[§ 22]{felixrht}). We promote this adjunction to the weight-graded setting.

First, the \textit{Chevalley-Eilenberg coalgebra construction} associates, to any dgla $L$,
a cdgc $\mathcal{C}_{CE}^*(L)$ whose underlying complex is the Chevalley-Eilenberg complex of $L$.
As a graded coalgebra  we have that 
\[\mathcal{C}_{CE}^*(L) := \Lambda^c(s^{-1} L).\]
where $\Lambda^c(V)$ denotes the free graded cocommutative coalgebra generated by the desuspension $s^{-1}V$ of $V$, which  in each degree is given by
$(s^{-1}V)^n:=V^{n+1}.$
The differential on $\mathcal{C}_{CE}^*(L)$ is
 given by  $d=d_\alpha+ d_\beta$, where 
\[
d_\alpha(s^{-1}x_1\wedge \dots \wedge s^{-1}x_n):= \sum_{i}\pm s^{-1}x_1\wedge\cdots \wedge s^{-1}d_Lx_i\wedge \cdots \wedge s^{-1}x_n
\]
and
\[
d_\beta(s^{-1}x_1\wedge \dots \wedge s^{-1}x_n):=\sum_{i,j}\pm s^{-1}[x_i,x_j]\wedge s^{-1}x_1\wedge\cdots \wedge \widehat{s^{-1}x_i}\wedge \cdots \wedge \widehat{s^{-1}x_j}\wedge \cdots \wedge s^{-1}x_n.
\]
If the dgla $L$ carries a weight decomposition, we obtain a weight decomposition on the cdgc $\mathcal{C}_{CE}^*(L)$ by setting 
\[w(s^{-1}x_1\wedge \dots \wedge s^{-1}x_n):=w(x_1)+\dots+w(x_n).\]
The Chevalley-Eilenberg coalgebra construction is also defined for an $L_\infty$-algebra $(L,\ell_i)$, with the differential on $\mathcal{C}_{CE}^*(L)$ given by 
$$
d(s^{-1}x_1\wedge \dots \wedge s^{-1}x_n) =\sum_{m\leq n}\sum_{i_1,\dots, i_m}\pm\ell_m(x_{i_1},\dots\, x_{i_m})\wedge s^{-1}x_1\wedge\cdots \wedge \widehat{s^{-1}x_{i_1}}\wedge \cdots \wedge \widehat{s^{-1}x_{i_m}}\wedge \cdots \wedge s^{-1}x_n.
$$
This differential is weight preserving provided that the $L_\infty$-algebra carries a weight decomposition.

By Definition \ref{dualizing}, dualizing this construction,
the \textit{Chevalley-Eilenberg cdga} \[\Aa^*_{CE}(L) := \mathcal{C}_{CE}^*(L)^\vee\]
of a dgla of $L_\infty$-algebra with a weight decomposition,
inherits a weight decomposition. Note that this is only defined for dgla's and $L_\infty$-algebras of finite type.

Given a counital cdgc $C$ we obtain a non-counital cdgc 
$\overline{C}:= C/\field \mathbf{1}$ on which the reduced coproduct $\bar \Delta\colon\overline{C}\to\overline{C}\ot\overline{C}$ is given by $\bar \Delta(c)=\Delta(c)-c\ot \mathbf{1}-\mathbf{1}\ot c$.
The Quillen construction associates to $C$ the dgla $\mathscr L^\star(C)$, given as a graded object by 
$$
\mathscr L^\star(C):= 
\L(s\overline{C}),
$$
where $\L V$ denotes the free graded Lie algebra generated by $V$.
Its differential is the unique derivation on $\mathscr L^\star(C)$ that satisfies
$$d(sc) =-sd_C(c) + [-,-]\circ (s\ot s) \circ \bar \Delta(c)$$
for every $c\in\overline C$. 
If $C$ has a weight decomposition, we obtain a weight decomposition 
on $\mathscr L^\star(C)$ by letting
$$w([sc_1,[\cdots,[sc_{n-1},sc_n]\cdots]):=w(c_1)+\dots+w(c_k).$$

By Definition \ref{dualizing}, if $A$ is a graded unital cdga of finite type with a weight decomposition 
then its \textit{Quillen dgla construction} 
\[\mathscr L(A) := \mathscr L^\star(A^\vee)\]
has a weight decomposition.


The following is a promotion to the weight-graded setting of 
the well-known adjunction between the Chevalley-Eilenberg and Quillen constructions. The classical proof (see for instance \cite{lodayvallette}) adapts mutatis mutandis to algebras with weights after considering weight-preserving twisting morphisms:

\begin{prop}\label{prop:koszul-duality}
Let $A$ be a cdga of finite type and $L$ an $L_\infty$-algebra, both with weight decompositions.
There are weight preserving quasi-isomorphisms
\[q_a\colon \Aa^*_{CE}(\mathscr L(A))\to A
\text{ and }
q_\ell\colon \mathscr L^\star(\mathcal{C}_{CE}^*(L))\to L.
\]
\end{prop}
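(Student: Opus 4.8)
The plan is to deduce this from the classical (non-weighted) bar--cobar formalism for the Koszul pair of operads $(\Com,\Lie)$, and then check that every map produced by that formalism is weight-homogeneous. Recall that, under the connectivity hypotheses alluded to in \S\ref{subsec:CE-Quillen-constr}, the functor $\mathscr L^\star$ is left adjoint to $\mathcal{C}_{CE}^*$, with counit a natural quasi-isomorphism $\epsilon_L\colon \mathscr L^\star(\mathcal{C}_{CE}^*(L))\to L$ (a quasi-isomorphism because $\Lie$ is a Koszul operad) and unit a natural transformation $\eta_C\colon C\to \mathcal{C}_{CE}^*(\mathscr L^\star(C))$ which is a quasi-isomorphism whenever $C$ is conilpotent. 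Both are controlled by the universal twisting morphisms: $\epsilon_L=f_\iota$ is the morphism of ($L_\infty$-)Lie algebras induced by the canonical twisting morphism $\iota\colon \mathcal{C}_{CE}^*(L)\to L$, and $\eta_C=g_\pi$ is the morphism of cdgc's induced by the canonical twisting morphism $\pi\colon C\to \mathscr L^\star(C)$.

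First I would set $q_\ell:=\epsilon_L$, and $q_a:=(\eta_{A^\vee})^\vee$. Here we use in an essential way that $A$ is of finite type, so that (Definition \ref{dualizing}) $A^\vee$ is a cdgc of finite type with $(A^\vee)^\vee=A$, with $\mathscr L(A)=\mathscr L^\star(A^\vee)$ and $\Aa^*_{CE}(\mathscr L(A))=\mathcal{C}_{CE}^*(\mathscr L^\star(A^\vee))^\vee$; moreover the connectivity hypothesis makes $A^\vee$ conilpotent, so $\eta_{A^\vee}$ is a quasi-isomorphism, and dualizing finite-type complexes over a field preserves quasi-isomorphisms, so $q_a$ is one as well. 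Up to this point nothing has changed from the classical statement, e.g. as in \cite{lodayvallette}.

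The only genuinely new point is weight-homogeneity, and this is where I would spend the work. The canonical twisting morphisms are manifestly weight-preserving: $\pi$ sends $c\in\overline C$ to the generator $sc$ of $\mathscr L^\star(C)=\L(s\overline C)$, and $w(sc)=w(c)$ by the weight decomposition fixed on $\mathscr L^\star(C)$ in \S\ref{subsec:CE-Quillen-constr} (the (de)suspension carries weight $0$); dually for $\iota$. More generally, if $\alpha\colon C\to L$ is a weight-preserving twisting morphism between a weight-graded cdgc and a weight-graded ($L_\infty$-)Lie algebra, then the convolution Lie algebra $\Hom(C,L)$ inherits a weight grading, $\alpha$ is a weight-zero Maurer--Cartan element of it, and the induced maps $f_\alpha\colon \mathscr L^\star(C)\to L$ and $g_\alpha\colon C\to\mathcal{C}_{CE}^*(L)$ are weight-preserving, since on (co)generators they are built from $\alpha$ by composition with the coproducts of $C$, the brackets $\ell_m$ of $L$, and the (de)suspensions, all of which preserve weights by construction. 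Applying this to $\alpha=\iota$ and $\alpha=\pi$ shows that $\epsilon_L$ and $\eta_C$ preserve weights; hence so does $q_\ell$, and so does $q_a=(\eta_{A^\vee})^\vee$ because dualization preserves weights by Definition \ref{dualizing}.

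The main obstacle, such as it is, is purely organizational rather than conceptual: one has to make sure that the whole chain of classical constructions — the convolution Lie algebra, the universal twisting morphisms, the comparison morphisms, and the acyclicity of the Koszul complex of the pair $(\Com,\Lie)$ that underlies the quasi-isomorphism statements — is carried out inside the category of weight-graded objects and weight-preserving maps, and that no step secretly mixes weights. Since in each case the relevant maps are assembled from operations which are weight-homogeneous by Definition \ref{Def_cdga_weight} and its operadic analogues together with the weight gradings fixed in \S\ref{subsec:CE-Quillen-constr}, this verification is mechanical and introduces no new estimate; this is exactly the sense in which the classical proof "adapts mutatis mutandis after considering weight-preserving twisting morphisms."
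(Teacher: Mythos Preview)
Your proposal is correct and follows essentially the same route as the paper: use the adjunction $\mathscr L^\star \dashv \mathcal{C}_{CE}^*$ mediated by twisting morphisms, observe that the universal twisting morphisms are weight-preserving and hence so are the unit and counit, and obtain $q_a$ by dualizing the unit at $A^\vee$. The paper states this tersely (``the classical proof adapts mutatis mutandis after considering weight-preserving twisting morphisms''), and your write-up simply unpacks that sentence.
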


\subsection{Weights on algebraic models}
Recall that a cdga $A$ is said to be \textit{connected} if it is unital and $\bar A = A/\field \mathbf{1}$ is concentrated in positive cohomological degrees. 
The homotopy theory of cdga's has its obvious adaptation to the weight-graded setting. We review some main results on minimal models for cdga's and dgla's respectively.

\begin{dfn}
A \textit{weight-graded cofibrant cdga} is the colimit of free weight-graded extensions $A_{i}=A_{i-1}\otimes_d\Lambda V_{i}$,
starting from $A_0=\field$, where each $V_i$ is a bigraded vector space where the first grading is concedntrated in degree $i$. It is said to be \textit{minimal} if such 
extensions are ordered by non-decreasing positive cohomological degrees. 
If $A$ is a cdga with a weight decomposition, then a 
\textit{weight-graded cofibrant (resp. minimal) model} of $A$  is given by a weight preserving quasi-isomorphism $M\to A$ where $M$ is a weight-graded cofibrant (resp. minimal) cdga. 
\end{dfn}

The original construction of minimal models for connected cdga's extends to the weight-graded setting without surprises. In Lemma 3.2 in \cite{bcc} it is shown that positive weights are preserved under the minimal model construction. In fact, the same proof gives the following: 

\begin{lemma}\label{cdgamodelpositiveweights}
Let $A$ be a connected cdga with a weight decomposition whose induced decomposition in cohomology is positive. Then there is a weight-graded minimal model of $A$ with a positive weight decomposition. This is unique up to weight-graded isomorphism.
\end{lemma}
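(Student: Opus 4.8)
The plan is to carry out the standard inductive construction of the Sullivan minimal model of the connected cdga $A$, but keeping track of a weight decomposition at every stage and checking that it remains positive. This is essentially the argument of Lemma 3.2 in \cite{bcc}, which I would reuse almost verbatim, adding only the bookkeeping needed to control the weights of the generators introduced at each step.

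Concretely, I would construct an increasing sequence of weight-graded minimal cdga's $\field = M(0) \subseteq M(1) \subseteq M(2) \subseteq \cdots$ together with weight-preserving cdga maps $\phi_i \colon M(i) \to A$ such that $H^j(\phi_i)$ is an isomorphism for $j \le i$ and a monomorphism for $j = i+1$. Passing from $M(i)$ to $M(i+1)$ means adjoining a space $V^{i+1}$ of generators of cohomological degree $i+1$, of the form $V^{i+1} = V^{i+1}_{\mathrm{cl}} \oplus V^{i+1}_{\mathrm{rel}}$: the closed generators $V^{i+1}_{\mathrm{cl}}$ are mapped isomorphically onto a basis of $\coker H^{i+1}(\phi_i)$, while the differential sends $V^{i+1}_{\mathrm{rel}}$ onto a chosen set of cocycle representatives of $\ker H^{i+2}(\phi_i)$, with $\phi_i$ extended by choosing, for a generator $v$ with $dv = z$, an element $a \in A^{i+1}$ such that $da = \phi_i(z)$ and setting $\phi_{i+1}(v) = a$. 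The only new point is that all of these choices can be made weight-homogeneous and of positive weight. Since the weight decomposition on $H^*(A)$ is positive and $i+1 \ge 1$, the space $\coker H^{i+1}(\phi_i)$ is concentrated in strictly positive weights, so its basis elements may be taken weight-homogeneous of positive weight, and each lifts to a cocycle of $A$ of the same weight because $d$ preserves the weight decomposition. Likewise $\ker H^{i+2}(\phi_i)$ is a weight-graded subspace of the cohomology of $M(i)$ (enlarged by the new closed generators), hence admits a weight-homogeneous basis; because $M(i)$ is positive by the inductive hypothesis and $i+2 \ge 2 \ne 0$, its degree-$(i+2)$ part is concentrated in positive weights, so the representing cocycles $z$ have positive weight, and we declare the corresponding generator $v$ to have weight $w(z)$ so that its differential is weight-preserving. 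Finally, an element $a$ with $da = \phi_i(z)$ may be chosen of weight $w(z)$: since $\phi_i$ preserves weights, $\phi_i(z)$ is a weight-homogeneous coboundary in $A$, and because $d$ preserves the decomposition it is the differential of an element of $A$ of the same weight. Consequently $M(i+1)$ is a weight-graded minimal cdga whose weight decomposition is positive: its degree-zero part is $\field$ of weight $0$, while every monomial of positive total degree is a product of generators each of which sits in positive degree and has positive weight, hence has positive weight. Taking $M := \varinjlim_i M(i)$ and $\phi := \varinjlim_i \phi_i$ then gives a weight-graded minimal model of $A$ with positive weights.

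For uniqueness I would appeal to the weight-graded homotopy theory of cdga's discussed above. Given two weight-graded minimal models $\phi \colon M \to A$ and $\phi' \colon M' \to A$, the weight-graded version of the standard obstruction-theoretic lifting for minimal cdga's --- lifting a weight-preserving map through a weight-preserving quasi-isomorphism, up to a weight-preserving homotopy --- produces a weight-preserving cdga morphism $\psi \colon M \to M'$ with $\phi' \psi$ homotopic to $\phi$ through a weight-preserving homotopy. Since $\phi$ and $\phi'$ are quasi-isomorphisms, so is $\psi$; and a weight-preserving quasi-isomorphism between weight-graded minimal cdga's is an isomorphism, by the usual argument comparing indecomposables degree by degree, which manifestly respects the weight splitting. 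Hence $M \cong M'$ as weight-graded cdga's.

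The step I expect to require the most care --- though it is bookkeeping rather than anything conceptually new --- is verifying that positivity is genuinely preserved at the inductive step. This relies simultaneously on the hypothesis that the \emph{cohomological} weight decomposition is positive (so that the generators needed to surject onto cohomology carry positive weight) and on the fact that $d$ and each $\phi_i$ strictly preserve weights (so that the relations killed by $V^{i+1}_{\mathrm{rel}}$ and the elements $a$ used to extend $\phi_i$ live in the expected positive weights). Everything else is the classical minimal model machinery carried along with one extra grading, exactly as in \cite{bcc}.
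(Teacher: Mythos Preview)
Your proposal is correct and follows exactly the approach the paper indicates: the paper gives no detailed proof of this lemma, merely noting that ``the same proof'' as Lemma~3.2 in \cite{bcc} applies, and your outline is precisely a spelled-out version of that argument --- the classical inductive Sullivan construction with the weight bookkeeping made explicit. Your verification that both the closed generators (via positivity of $H^{>0}(A)$) and the relation generators (via positivity of $M(i)^{>0}$ by induction) acquire positive weights, together with the weight-graded lifting argument for uniqueness, matches what the cited reference does.
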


A dgla $L$ is said to be \textit{connected} if it is concentrated in non-positive cohomological degrees. 
Minimal dgla models are defined and constructed analogously: in the simply connected case (dgla's concentrated in strictly negative cohomological degrees) minimal dgla models exist (see e.g. \cite[§ 24]{felixrht}). Neisendorfer \cite{neisendorfer} conjectured that connected dgla's with nilpotent cohomology also admit minimal models. This is to the knowledge of the authors not proved, but a completed version of the statement holds (see e.g. \cite[Proposition 3.16]{bfmt}). However, quasi-free (not necessary minimal) dgla models for such dgla's always exist and are given by $\mathscr L^\star(\mathcal{C}^*_{CE}(L))$.
If $\Aa_{CE}^*(L)$ is formal, then a minimal dgla model for $L$ is given by $\mathscr L(H^*(\Aa_{CE}^*(L)))$. By Proposition \ref{prop:koszul-duality}, these models promote to the weight-graded setting. Moreover, we have:

\begin{prop}\label{prop:weights-on-indec-new}
Let $\Lambda V$ be a connected weight-graded cofibrant cdga and $\L W$ 
be a connected weight-graded cofibrant dgla.
\begin{enumerate}[label=(\alph*)]
\item\label{item:indecone}  There is an isomorphism of bigraded vector spaces
\[H^*(V)^\vee \cong s^{-1} H^*(\mathscr L(\Lambda V)).\]
\item\label{item:indectwo}  There is an isomorphism of bigraded vector spaces
\[H^*(W) \cong s H^*(\overline{\mathcal{C}}_{CE}^*(\L W)).\]
\end{enumerate}
\end{prop}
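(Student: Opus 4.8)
The plan is to treat parts \ref{item:indecone} and \ref{item:indectwo} as Koszul-dual mirror images of one another: I would carry out \ref{item:indectwo} in full and then transpose the argument to \ref{item:indecone}. Here and below $H^*(W)$ (resp.\ $H^*(V)$) denotes the cohomology of $W$ (resp.\ $V$) equipped with the \emph{linear part} $\delta_0$ (resp.\ $d_0$) of the differential, i.e.\ the composite $W\hookrightarrow\L W\xrightarrow{\delta}\L W\twoheadrightarrow W$ (resp.\ $V\hookrightarrow\Lambda V\xrightarrow{d}\Lambda V\twoheadrightarrow V$). The guiding principle is that both constructions are homotopically trivial on free or cofree inputs --- the reduced Chevalley--Eilenberg coalgebra of a free dg Lie algebra on a complex $U$, and dually the Quillen construction on a free cdga, each compute nothing but a shift of $U$ --- and that a general quasi-free input reduces to this case through a filtration with free, resp.\ cofree, associated graded.

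For \ref{item:indectwo} I would decompose $\L W=\bigoplus_{\ell\ge1}\L^{(\ell)}W$ by bracket length, observing that $\delta$ splits accordingly as $\delta=\delta_0+\delta_1+\cdots$ with each $\delta_i$ a derivation raising bracket length by $i$; in particular $(\L W,\delta_0)$ is precisely the free dg Lie algebra on the complex $(W,\delta_0)$. I would then filter $\overline{\mathcal C}^*_{CE}(\L W)=\overline{\Lambda^c(s^{-1}\L W)}$ by the total bracket length $\sum_i\ell(x_i)$ of a generator $s^{-1}x_1\wedge\cdots\wedge s^{-1}x_n$. Since $d_\beta$ preserves bracket length while $d_\alpha$ cannot lower it, this is an exhaustive, Hausdorff filtration by weight-graded subcomplexes, whose associated graded is $\overline{\mathcal C}^*_{CE}(\L(W,\delta_0))$, the reduced Chevalley--Eilenberg complex of a free dg Lie algebra on a complex. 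By the classical computation of the latter --- equivalently, of $\Tor$ of $\field$ over the tensor algebra $U(\L W)=T(W)$ by means of the two-term free resolution, see e.g.\ \cite[\S22]{felixrht} or \cite{lodayvallette} --- its cohomology equals $s^{-1}H^*(W)$ and is concentrated in bracket length $1$. Hence the $E_1$-page of the spectral sequence lies in a single filtration degree, the sequence degenerates, and one reads off a weight-preserving isomorphism of bigraded vector spaces $H^*(\overline{\mathcal C}^*_{CE}(\L W))\cong s^{-1}H^*(W)$; applying $s$ gives \ref{item:indectwo}.

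For \ref{item:indecone} I would dualize the whole argument. Using that $\Lambda V$ is connected of finite type, $\mathscr L(\Lambda V)=\mathscr L^\star((\Lambda V)^\vee)=\L(s\,\overline{(\Lambda V)^\vee})$, with $(\Lambda V)^\vee$ the (degreewise finite) cofree conilpotent cocommutative coalgebra on $V^\vee$ whose coradical filtration is dual to the polynomial grading $\Lambda V=\bigoplus_n\Lambda^nV$. I would filter $\mathscr L(\Lambda V)$ by polynomial degree, a Lie generator $sc$ with $c\in(\Lambda^nV)^\vee$ being assigned degree $n$ and brackets adding degrees. The free-Lie part of the Quillen differential preserves this degree and the part dual to $d_{\Lambda V}$ cannot raise it, so this is an exhaustive, bounded-below filtration by weight-graded subcomplexes, with associated graded $\mathscr L^\star$ of the cofree cocommutative coalgebra on the complex $(V^\vee,d_0^\vee)$. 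By Koszul duality --- the transpose of the computation above, or Proposition \ref{prop:koszul-duality} applied to the free cdga $\Lambda(V,d_0)$ --- this is acyclic outside polynomial degree $1$, where its cohomology is $s\,H^*(V^\vee,d_0^\vee)=s\,H^*(V)^\vee$. Degeneration of the spectral sequence then produces a weight-preserving isomorphism $H^*(\mathscr L(\Lambda V))\cong s\,H^*(V)^\vee$, and applying $s^{-1}$ gives \ref{item:indecone}.

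I expect two points to require genuine care. The first is the ``free/cofree case'': that the reduced Chevalley--Eilenberg complex of a free dg Lie algebra on $(U,d_U)$ is quasi-isomorphic to $s^{-1}(U,d_U)$ and concentrated in bracket length one, together with its dual statement for $\mathscr L^\star$ of a cofree coalgebra. This is standard Koszul duality, but it has to be set up with the bracket length (resp.\ coradical) grading, the (de)suspensions, and the weights all tracked simultaneously. The second is convergence of the spectral sequences: the increasing, bounded-below filtration of part \ref{item:indecone} always yields a convergent spectral sequence, whereas the decreasing bracket-length filtration of part \ref{item:indectwo} converges at once when $\L W$ is simply connected (the filtration is then finite in each cohomological degree) but in the merely connected case should be handled by replacing $\overline{\mathcal C}^*_{CE}$ by its completed variant, as in \cite{bfmt}.
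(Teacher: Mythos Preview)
Your approach is sound and essentially correct, but it differs from the paper's argument. The paper proceeds much more directly: it invokes the weight-preserving counit quasi-isomorphism $q_a\colon \Aa^*_{CE}(\mathscr L(\Lambda V))\to \Lambda V$ from Proposition~\ref{prop:koszul-duality}, observes that the induced map on generating complexes $s\mathscr L(\Lambda V)^\vee\to V$ is again a (weight-preserving) quasi-isomorphism by the standard fact that a quasi-isomorphism between quasi-free objects induces one on indecomposables (cited from \cite[\S12.1.3]{lodayvallette}), and then dualizes. Part~\ref{item:indectwo} is handled by the same argument with $q_\ell$ in place of $q_a$. So where you set up a bracket-length (resp.\ polynomial-degree) filtration, compute the associated graded as the free/cofree case via $\Tor$ over the tensor algebra, and run a degenerating spectral sequence, the paper simply quotes the bar--cobar unit/counit and the indecomposables lemma.

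What each approach buys: yours is more explicit and self-contained (you actually compute the free case), but you must manage spectral-sequence convergence. You correctly note this is delicate for part~\ref{item:indectwo} in the merely connected case---the bracket-length filtration is Hausdorff and exhaustive but not complete when $W$ has degree-zero generators, and $E_1$-degeneration alone does not force $H^*(F^2)=0$ without completeness. Your proposed fix via the completed Chevalley--Eilenberg construction is reasonable but would itself need justification. The paper's route avoids this entirely: the ``quasi-iso on indecomposables'' lemma it cites encapsulates the same filtration argument, but packaged once and for all, so no convergence bookkeeping is visible in the proof. For part~\ref{item:indecone} your claim that the increasing bounded-below exhaustive filtration always converges is correct, so that half is unproblematic.
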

\begin{proof}
By Proposition \ref{prop:koszul-duality} 
the quasi-isomorphism $q_a\colon \Aa^*_{CE}(\mathscr L(\Lambda V))\to \Lambda V$ preserves weights. The induced map on the indecomposables $s\mathscr L(\Lambda V)^\vee \to V$ is a quasi-isomorphism (see \cite[§ 12.1.3]{lodayvallette}) which is also weight preserving. Dualizing this quasi-isomorphism we obtain the first statement.  
The second statement is proved similarly.
\end{proof}

\subsection{Weight-graded homotopy transfer}
Let $\Oo$ be one of the operads $\mathcal Ass$, $\Com$ or $\Lie$. Recall that for these specific operads, an $\Oo_\infty$-algebra structure on a differential graded vector space $A$ is completely determined by a collection of maps $\mu_i\colon A^{\ot i}\to A$, with $i\geq 1$, where $\mu_1$ is the differential of $A$ and the collection $\{\mu_i\}$ satisfies certain compatibility conditions depending on the operad $\Oo$. 
When considering weight-graded $\Oo_\infty$-algebras such operations are assumed to preserve weights.
There are obvious notions of 
weight-graded $\infty$-$\Oo_\infty$-morphism
and weight-graded minimal $\Oo_\infty$-models.

The following result appears in \cite{BudurRubio} in the Lie setting. The proof is an adaptation to the weight-graded setting of the classical proof and works more generally for other operadic algebras.
\begin{prop}\label{prop:weightgradedHTT}
Let $\Oo$ be one of the operads $\mathcal Ass$, $\Com$ or $\Lie$.
\begin{enumerate}[label=(\alph*)]
    \item\label{item:HTT-minimal-model} 
    Weight-graded minimal $\Oo_\infty$-models exist and are unique up to weight preserving $\infty$-$\Oo_\infty$-isomorphisms.
    \item\label{item:HTT-inverse} The inverse of a weight preserving $\infty$-$\Oo_\infty$-isomorphism is also weight preserving.
    \item\label{item:HTT-htpy-inverse} A weight preserving $\infty$-$\Oo_\infty$-quasi-isomorphism has a weight preserving homotopy inverse.
\end{enumerate}
\end{prop}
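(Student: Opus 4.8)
The plan is to replay the classical homotopy transfer machinery (as presented in \cite{lodayvallette}, or \cite{BudurRubio} in the Lie case) while keeping track of weights; the guiding principle is that every map and every closed formula occurring in that machinery is manifestly weight preserving as soon as its input data is. I would prove the three statements in the order \ref{item:HTT-inverse}, then \ref{item:HTT-htpy-inverse}, then the existence and uniqueness parts of \ref{item:HTT-minimal-model}: uniqueness of minimal models will use \ref{item:HTT-htpy-inverse}, which in turn uses \ref{item:HTT-inverse} together with the (weight-graded) homotopy transfer theorem, and the latter is logically independent of all three statements.

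For \ref{item:HTT-inverse}, recall that an $\infty$-$\Oo_\infty$-isomorphism $f$ is an $\infty$-morphism whose linear part $f_1$ is an isomorphism of the underlying graded vector spaces. Since $f_1$ preserves weights it respects the decomposition into weight-homogeneous pieces, so $f_1^{-1}$ is weight preserving as well. The inverse $\infty$-morphism $g$ is then produced by the usual inductive recipe in which $g_n$ is written as a composite built from $f_1^{-1}$, the previously constructed components $g_i$ with $i<n$, and the higher components $f_j$ of $f$; an immediate induction on arity shows that each $g_n$ is weight preserving.

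For the transfer theorem itself, let $A$ be a weight-graded $\Oo_\infty$-algebra. The differential $\mu_1$ preserves weights, so each weight piece $(A_p,\mu_1)$ is a subcomplex and $H^*(A)=\bigoplus_p H^*(A_p)$, which is the induced weight decomposition on cohomology. Working over a field of characteristic zero, I would choose, weight piece by weight piece, a contraction of $A_p$ onto $H^*(A_p)$ satisfying the usual side conditions, and assemble these into weight-preserving maps $i\colon H^*(A)\to A$, $\pi\colon A\to H^*(A)$ and a homotopy $h\colon A\to A$ exhibiting $A$ as a deformation retract of its cohomology. The transferred $\Oo_\infty$-structure on $H^*(A)$, the $\infty$-quasi-isomorphisms $i_\infty\colon H^*(A)\to A$ and $\pi_\infty\colon A\to H^*(A)$ extending $i$ and $\pi$, and the $\infty$-homotopy witnessing $i_\infty\pi_\infty\simeq\id_A$ are all given by finite sums (one sum per arity) over trees whose vertices are decorated by the operations $\mu_n$ of $A$, whose leaves carry $i$, whose root carries $\pi$, and whose internal edges carry $h$; since all of $i,\pi,h,\mu_n$ are weight preserving, so is each summand and hence so is every transferred operation, morphism and homotopy. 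As the transferred $\mu_1$ vanishes, this yields a weight-graded minimal $\Oo_\infty$-model of $A$, giving the existence half of \ref{item:HTT-minimal-model}.

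For \ref{item:HTT-htpy-inverse}, given a weight-preserving $\infty$-quasi-isomorphism $f\colon A\to B$, I would use the transfer data for $A$ and for $B$ and consider $\phi=\pi_\infty^B\circ f\circ i_\infty^A\colon H^*(A)\to H^*(B)$: this is a weight-preserving $\infty$-quasi-isomorphism between minimal $\Oo_\infty$-algebras, so its linear part is an isomorphism, $\phi$ is an $\infty$-isomorphism, and by \ref{item:HTT-inverse} it has a weight-preserving inverse $\phi^{-1}$. Then $g:=i_\infty^A\circ\phi^{-1}\circ\pi_\infty^B$ is a weight-preserving $\infty$-morphism $B\to A$, and the standard computation with the homotopies $i_\infty^A\pi_\infty^A\simeq\id_A$, $i_\infty^B\pi_\infty^B\simeq\id_B$ and $\phi^{-1}\phi\simeq\id$, $\phi\phi^{-1}\simeq\id$ shows it is a homotopy inverse of $f$. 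Uniqueness in \ref{item:HTT-minimal-model} then follows: given two weight-graded minimal models $M\to A$ and $M'\to A$, composing the first with a weight-preserving homotopy inverse $A\to M'$ of the second produces a weight-preserving $\infty$-quasi-isomorphism $M\to M'$ between minimal algebras, which is automatically an $\infty$-isomorphism. I do not expect a genuine obstacle here: the only real content beyond transcribing the classical arguments is the observation that $\mu_1$ being weight preserving forces the contraction onto cohomology to split along weights, so that $i,\pi,h$ can be chosen weight preserving; after that, everything is a polynomial identity in weight-preserving maps, and the work is bookkeeping.
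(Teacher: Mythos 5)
Your proposal is correct and follows essentially the same route as the paper: both arguments consist of choosing the contraction onto cohomology weight-homogeneously and then observing that every explicit formula in the classical transfer machinery (transferred operations, the extensions $i^\infty$, $p^\infty$, the inverse of an $\infty$-isomorphism, and the homotopy-inverse construction of \cite[Theorem 10.4.4]{lodayvallette}) is a sum of composites of weight-preserving maps, with uniqueness of minimal models deduced exactly as you do, from a weight-preserving $\infty$-quasi-isomorphism between minimal models being an $\infty$-isomorphism. The only difference is presentational: the paper cites the Loday--Vallette argument for statement (c) where you spell it out.
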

\begin{proof}
We sketch the idea of the proof for completeness. Let $(A,\{\mu_k\})$ be a weight-graded $\Oo_\infty$-algebra and $B$ a weight-graded dg vector space. Assume there is a homotopy retract
    $$
    \xymatrix{A\ar@(dl,ul)^{h}\ar@<0.5ex>[r]^-p &B\ar@<0.5ex>[l]^-i}
    $$
where $p$, $i$ and $h$ are weight preserving and $p$ and $i$ are cochain maps. Then there is an explicit formula for a transferred $\Oo_\infty$-algebra structure $(B,\{\eta_k\})$ where each $\eta_k$ is given as linear combinations of maps given as compositions of $p$, $i$, $h$, and $\mu_\ell$, where $\ell\leq k$ (we refer the reader familiar with the theory of operads to \cite[§ 10.3]{lodayvallette}). Since all of these maps are weight preserving, it follows that $\eta_k$ is also weight preserving.

One also shows that $i$ extends to an  $\infty$-$\Oo_\infty$-quasi-isomorphism 
$$i^\infty\colon (B,\{\eta_i\})\rightsquigarrow (A,\{\mu_i\}),$$ 
whose explicit formula is again given by linear combinations of maps given as compositions of the maps mentioned earlier (see \cite[§ 10.3.5]{lodayvallette}). Since there exists a weight preserving homotopy retract in which $B = H^*(A)$ the existence part of statement \textit{\ref{item:HTT-minimal-model}} follows.

For the uniqueness part, we need some more: If $B =H^*(A)$, then one can show that also $p$  extends to an $\infty$-$C_\infty$-quasi-isomorphism $p^\infty$, which by similar arguments as above is weight preserving.

Statement \textit{\ref{item:HTT-inverse}} is proved again by inspecting the explicit formula for the inverse of a weight preserving $\infty$-isomorphism $\{f_k\}\colon A\rightsquigarrow B$ (which again is a linear combination of compositions the maps that are weight preserving).
Now given all this, the proof of \cite[Theorem 10.4.4]{lodayvallette} holds word by word in this setting as well, yielding the proof of \textit{\ref{item:HTT-htpy-inverse}}.

The uniqueness part of \textit{\ref{item:HTT-minimal-model}} follows, since any two minimal weight-graded models will have a weight-graded $\infty$-quasi-isomorphism between them, which is a weight-graded $\infty$-isomorphism by minimality.
\end{proof}

\subsection{Algebraic models of spaces}
The rational homotopy type of any nilpotent space $X$ of finite type
is algebraically modelled by either cdga's (as in Sullivan's approach) or by dgla's (as in Quillen's approach).
The Sullivan minimal model of a connected topological space $X$, defined uniquely up to isomorphism, is given by a minimal cdga cofibrant model of the algebra of piece-wise linear forms $\Aa_{pl}(X)$ on $X$. 
On the other hand, if $X$ is simply connected, then the Quillen minimal model of $X$ is given by a minimal dgla model
of the Lie algebra $\lambda(X)$, where $\lambda$ denotes Quillen's construction in \cite{quillenRHT}.
Note that $\lambda$ is only defined for simply connected spaces, but the theory of dgla models can be extended to nilpotent spaces of finite type by
taking first a Sullivan minimal model and then applying Quillen's functor 
$\mathscr L$ from finite type cdga's to dgla's (see \cite{neisendorfer}). A further extension of Quillen models to general topological spaces (not necessarily connected or nilpotent) is treated in \cite{bfmt}.
Recall that a topological space is said to be \textit{formal} if it has a formal Sullivan model. It is called \textit{coformal} if it has a formal Quillen model.

By Proposition \ref{prop:koszul-duality} and Proposition \ref{prop:weightgradedHTT},
a weight graded algebraic model for $X$ (cdga, $C_\infty$-algebra, dgla, $L_\infty$-algebra) is uniquely represented, up to weight-graded isomorphism in the corresponding category, by either a weight-graded cdga minimal model, a weight graded minimal $C_\infty$-algebra model or a weight graded minimal $L_\infty$-algebra model.

In the simply connected case, a weight graded algebraic model is also uniquely represented by a unique weight graded minimal dgla model. In the non-simply connected case, it is not known whether minimal dgla models always exist (Neisenforfer cojectured their existence, and a proof of a completed version of the Neisendorfer conjecture can be found in \cite[Proposition 3.16]{bfmt}). In Section \ref{existenceminimalmodels} we discuss properties that guarantee the existence of such models.

\begin{rmk}\label{rmk:induced-weights-on-indec}
A non-trivial  weight decomposition on a Sullivan minimal model $\Lambda V$ for a nilpotent space $X$ of finite type is a source of weight decompositions on the rational homotopy groups in two different ways. First, there is an induced weight decomposition on $H^*(V)\cong \pi_*^\Q(X)^\vee$. Second, there is an induced weight decomposition on 
\[H^*(\mathscr L(\Lambda V))\cong \pi_{-*}^\Q(\Omega X) = \pi_{-*-1}^\Q(X).\] By Proposition \ref{prop:weights-on-indec-new} \textit{\ref{item:indecone}} these two weight decompositions coincide.

Similarly, a non-trivial indecomposable induced weight decomposition on a dgla model for $X$ is a source of two weight decompositions on the rational (co)homology of $X$, which coincide by Proposition \ref{prop:weights-on-indec-new} \textit{\ref{item:indectwo}}.
\end{rmk}

\section{Restrictions on weights}
In this section we consider various properties that restrict the range of weights in our algebras. We first introduce the formality and coformality induced weights. Then review the theory of positive and pure weights and introduce an intermediate property which gives vanishing of higher operations in cohomology.

\subsection{Formality and coformality induced weights}
If a nilpotent space $X$ of finite type is formal, then of course it has a weight-graded model, namely its cohomology $H^*(X)$ with weights given by $w(x)=|x|$. The theory of weight-graded minimal models gives a unique up to isomorphism weight-graded minimal cdga model $\Lambda V\to H^*(X)$ and a unique  weight-graded dgla model $\mathscr L(H^*(X))$. We call these the \textit{formality induced weight decompositions} on the algebraic models for the space $X$.

\begin{exmp}\label{exmp:lie-model-for-cpn} Since $\CP^k$ is formal, its cohomology is its own cdga model: the algebra $\Q[u]/(u^{k+1})$, $|u|=2$, $w(u)=2$ with the trivial differential is a weight-graded cdga model for $\CP^k$. The formality induced weight decomposition on the dgla model of $\CP^k$ is explicitly given by 
$$
\L(v_1,\dots, v_k),\, |v_i| =1-2i,\, w(v_i) = -w(u^i)=-2i,\, d(v_i) = \frac12\sum_{a+b=i}[v_a,v_b].
$$
\end{exmp}

If a nilpotent space $X$ of finite type is coformal, then of course it has a weight-graded Lie model, namely the rational homotopy groups $\pi_*(\Omega X)\ot \Q$ on the based loop space of $X$  with weights given by $w(x)=|x|$.  If $X$ is simply connected, there is a unique up to isomorphism weight-graded minimal  dgla model  $\L W\to \pi_*(\Omega X)\ot \Q$ and a unique  weight-graded Sullivan minimal model $\Aa^*_{CE}(\pi_*(\Omega X)\ot \Q ) $. We call these the \textit{coformality induced weight decompositions} on the algebraic models for $X$. If $X$ is not simply connected it is still possible to deduce the existence of minimal dgla models, for instance when $X$ is formal (other cases discussed in Section \ref{existenceminimalmodels}).

We state two results about formality and coformality induced weight decompositions that we will use later on.

\begin{prop}\label{prop:weight<deg} 
Let $X$ be a nilpotent space of finite type.
\begin{enumerate}[label=(\alph*)]
    \item\label{item:formal} If $X$ is formal, then the formality induced weight decomposition on its Quillen minimal model $\L W$ satisfies $w(x)<|x|$ for every non-trivial $x\in \L W$ of pure weight $w(x)$ and cohomological degree $|x|$. 
    \item\label{item:coformal} If $X$ is coformal, then the coformality induced weight decomposition on its Sullivan minimal model $\Lambda V$ satisfies $w(x)<|x|$ for every non-trivial $x\in \Lambda V$ of pure weight $w(x)$ and cohomological degree $|x|$.
\end{enumerate}
\end{prop}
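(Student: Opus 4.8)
The plan is to prove part \ref{item:coformal} carefully and then deduce \ref{item:formal} by a duality argument using Proposition \ref{prop:weights-on-indec-new}, since the two statements are Koszul-dual to each other. For \ref{item:coformal}, recall that the coformality induced weight decomposition on the Sullivan minimal model $\Lambda V$ is the one carried by $\Aa^*_{CE}(\pi_*(\Omega X)\ot\Q)$, where the Lie algebra $\pi_*(\Omega X)\ot\Q$ has weights equal to cohomological degree, i.e. $w(y)=|y|$ for generators $y$. Under the Chevalley--Eilenberg construction, a generator $s^{-1}y$ of $\mathcal C^*_{CE}(\pi_*(\Omega X)\ot\Q)$ has cohomological degree $|y|+1$ and weight $w(y)=|y|$, hence $w(s^{-1}y)=|s^{-1}y|-1<|s^{-1}y|$ on generators. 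More generally, a monomial $s^{-1}y_1\wedge\cdots\wedge s^{-1}y_n$ in the dual (i.e.\ a weight-homogeneous element of $\Aa^*_{CE}$) has weight $\sum w(y_i)=\sum(|s^{-1}y_i|-1)$ and cohomological degree $\sum|s^{-1}y_i|$, so its weight is strictly less than its degree, with deficit equal to the word length $n\geq 1$. Thus the claim holds already on $\Aa^*_{CE}(\pi_*(\Omega X)\ot\Q)$ itself.

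The point is then to transfer this inequality to the \emph{minimal} model $\Lambda V$. By Lemma \ref{cdgamodelpositiveweights} (or rather its analogue for general weight decompositions) there is a weight-preserving quasi-isomorphism $\Lambda V\to \Aa^*_{CE}(\pi_*(\Omega X)\ot\Q)$, and I would argue on indecomposables: the induced map $V\to s^{-1}(\pi_*(\Omega X)\ot\Q)^{\vee}$ on indecomposables — or rather the weight-preserving isomorphism on cohomology of indecomposables $H^*(V)\cong\pi^\Q_*(X)^\vee$ from Proposition \ref{prop:weights-on-indec-new}\ref{item:indecone}, combined with $H^*(V)\cong s^{-1}H^*(\mathscr L(\Lambda V))$ and the coformality identification — forces every generator of $V$ to have weight strictly below its cohomological degree. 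Concretely, a weight-homogeneous indecomposable of $\Lambda V$ in degree $n$ pairs with $\pi_n^\Q(X)$, which sits in weight $n-1$ under the coformal decomposition on $H^*(\mathscr L(\Lambda V))$ shifted by $s^{-1}$; so $w<|x|$ on generators. Since the product in $\Lambda V$ adds weights and adds degrees, and a weight-homogeneous element of $\Lambda V$ is a sum of monomials of fixed total degree and fixed total weight, the strict inequality $w(x)<|x|$ propagates to every non-trivial element of pure weight: if $x=x_1\cdots x_k$ with each $x_i\in V$ weight-homogeneous, then $w(x)=\sum w(x_i)<\sum|x_i|=|x|$.

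For \ref{item:formal} the argument is dual: the formality induced weight decomposition on the Quillen minimal model $\L W$ is the one on $\mathscr L(H^*(X))$, where $H^*(X)$ carries $w=|x|$; a generator of $s\overline{H^*(X)}$ has degree $|a|-1$ and weight $|a|$, so on generators of $\L W$ the weight \emph{exceeds} the degree by one — wait, this is the wrong direction, so I instead pass through the isomorphism $H^*(W)\cong sH^*(\overline{\mathcal C}^*_{CE}(\L W))$ of Proposition \ref{prop:weights-on-indec-new}\ref{item:indectwo}: a generator of $W$ in degree $n$ corresponds (after the shift $s$) to a class in $H^{n-1}(\overline{\mathcal C}^*_{CE}(\L W))\cong \widetilde H^{n-1}(X)$, which has weight $n-1<n$. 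Then, exactly as before, the bracket in $\L W$ adds weights and adds degrees, so $w(x)<|x|$ for every weight-homogeneous non-trivial $x\in\L W$. The main obstacle in writing this cleanly is bookkeeping the shift/desuspension conventions (cohomological grading for Lie algebras, lower index for weights) so that the inequality comes out with the correct sign on generators; once the generator case is pinned down, the propagation through products/brackets is immediate from the weight-additivity in Definition \ref{Def_cdga_weight}(2) and its Lie analogue.
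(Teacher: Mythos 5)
Your argument is correct and follows essentially the same route as the paper: identify the weights on the generators of the minimal model via Proposition \ref{prop:weights-on-indec-new}/Remark \ref{rmk:induced-weights-on-indec} (weight $=$ degree $-1$ on generators), then propagate the strict inequality by additivity of weights and degrees under products, respectively brackets. Your detour through the Chevalley--Eilenberg model and the transfer step is harmless but redundant, since for a formal (resp.\ coformal) space the constructions $\mathscr L(H^*(X))$ and $\Aa^*_{CE}(\pi_*(\Omega X)\ot\Q)$ have purely quadratic, hence decomposable, differentials and are therefore already the minimal models in question.
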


\begin{proof}
Given the formality induced weight decomposition on $\L W$, there is an isomorphism of weight-graded vector spaces $W \cong s(H^*(X;\Q))^\vee$ (see Remark \ref{rmk:induced-weights-on-indec}). In particular, for every $x\in W$ we have that $w(x) = |x|-1<|x|$. The proof of the second statement follows similarly.
\end{proof}

\begin{rmk}
  The above inequalities  hold for the cohomologies $H^*(\L W) = \pi_*(\Omega X)\ot \Q$ and $H^*(\Lambda V)= H^*(X;\Q)$ and thus for the minimal $L_\infty$- and $C_\infty$-models in respective case.  
\end{rmk}

\subsection{Weights on a tilted segments and purity}
We now consider weights restricted to a certain segment, generalizing the purity context. We prove that segmented weights give homotopical restrictions.

We will again assume that $\Oo$ is one of the operads $\mathcal Ass$, $\Com$ or $\Lie$. 

\begin{dfn}
Let $\alpha>0$ be a rational number and $k\geq 0$ an integer. We say that a weight-graded dg $\Oo$-algebra $A$ is \textit{$(\alpha,k)$-segmented} if $H^n(A)_p$ is non-trivial only when \[\alpha n\leq p\leq \alpha(n+k).\]
\end{dfn}
\begin{exmp}
If $A$ is $(1,2)$-segmented then the weights in the cohomology $H^*(A)$ are concentrated in the segment visualized below:
\begin{center}
    \includegraphics[scale=0.6]{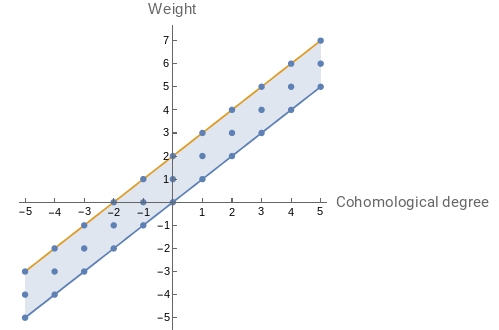}
\end{center}
\end{exmp}

\begin{rmk}
 Being $(\alpha,0)$-segmented is equivalent to being $\alpha$-pure in the sense of \cite{CiHo}. If $\alpha=a/b$ with $a$ and $b$ coprime, this implies that 
 $H^*(A)$ is concentrated in degrees 
 that are divisible by $b$, and $H^{kb}(A)$ is pure of weight $ka$.
  In particular, the case of $(1,0)$-segmented is the classical purity one encounters for compact Kähler manifolds.
Note that $(\alpha,k)$-segmented implies $(\alpha,k')$-segmented for every $k'\geq k$.
\end{rmk}

We will say that an $\Oo_\infty$-algebra $(A,\mu_1,\mu_2,\dots)$  has a minimal $\Oo_\infty^{\leq k}$-model, if it has a minimal $\Oo_\infty$-model
with vanishing operations $\mu_m=0$ for $m>k$.
Note that having a minimal $\Oo_\infty^{\leq 2}$-model is equivalent to formality
of the $\Oo_\infty$-algebra. We have:

\begin{prop}\label{prop:semipurityImpliesSemiformality}
Every weight-graded $(\alpha,k)$-segmented dg $\Oo$-algebra  has a minimal $\Oo_\infty^{\leq k+2}$-model.
\end{prop}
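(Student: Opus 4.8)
The plan is to combine the weight-graded homotopy transfer theorem (Proposition \ref{prop:weightgradedHTT}) with a simple weight-counting argument. By Proposition \ref{prop:weightgradedHTT}\ref{item:HTT-minimal-model}, the $(\alpha,k)$-segmented dg $\Oo$-algebra $A$ admits a weight-graded minimal $\Oo_\infty$-model $(H,\{\mu_m\})$ on $H=H^*(A)$, with each transferred operation $\mu_m$ weight preserving. It therefore suffices to show that the segmentation hypothesis forces $\mu_m=0$ for all $m\geq k+3$.

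The key observation is the bookkeeping of degrees and weights across $\mu_m\colon H^{\ot m}\to H$. Recall that, in the cohomological convention, an $\Oo_\infty$-operation $\mu_m$ has cohomological degree $2-m$ (for $\Com_\infty$ and $\mathcal{Ass}_\infty$; for $\Lie_\infty$ the $m$-ary bracket $\ell_m$ also has degree $2-m$, so the argument is uniform). So if $x_1,\dots,x_m\in H$ are homogeneous of degrees $n_1,\dots,n_m$ and $\mu_m(x_1,\dots,x_m)\neq 0$, then the output sits in degree $N:=n_1+\dots+n_m+2-m$. Since $\mu_m$ preserves weights, the output weight is $p:=w(x_1)+\dots+w(x_m)$. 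Now I would feed this into the two segmentation inequalities: each input satisfies $\alpha n_i\leq w(x_i)\leq \alpha(n_i+k)$, so summing gives $\alpha\sum n_i\leq p\leq \alpha(\sum n_i + mk)$. The output must satisfy $\alpha N\leq p\leq \alpha(N+k)$, i.e. $\alpha(\sum n_i+2-m)\leq p\leq\alpha(\sum n_i+2-m+k)$.

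Combining the lower bound on $p$ from the output constraint with the upper bound on $p$ from the inputs (after dividing by $\alpha>0$): $\sum n_i+2-m\leq \sum n_i+mk$, which is automatic and gives nothing. The useful comparison is the other pairing: the lower bound from the inputs, $\sum n_i\leq p/\alpha$, together with the upper bound from the output, $p/\alpha\leq \sum n_i+2-m+k$, yields $\sum n_i\leq \sum n_i+2-m+k$, i.e. $m\leq k+2$. Hence whenever $\mu_m$ is nonzero we must have $m\leq k+2$, so $\mu_m=0$ for $m\geq k+3$, and $(H,\{\mu_m\}_{m\leq k+2})$ is a minimal $\Oo_\infty^{\leq k+2}$-model of $A$. (One should also check the trivial edge cases: $m=1$ is the differential, which vanishes on $H$ by minimality, and $m=2$ is always allowed since $2\leq k+2$.)

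I do not anticipate a serious obstacle here; the only point requiring care is getting the degree convention for $\mu_m$ right in the cohomological/weight-lower-index setup used in the paper, and making sure the inequality manipulation uses the correct pair of bounds (it is easy to pair them the ``wrong'' way and get a vacuous statement). A secondary subtlety is that the segmentation condition is phrased on cohomology $H^n(A)_p$, so one must note that the transferred minimal model lives on $H^*(A)$ precisely, so the hypothesis applies directly to both the inputs and the output of each $\mu_m$; no comparison between $A$ and $H^*(A)$ weights beyond the identity $H^n(A)_p=H^n(A_p)$ is needed. With the convention pinned down, the proof is essentially the three-line inequality above.
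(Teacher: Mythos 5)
Your proposal is correct and is essentially the paper's own argument: take the weight-graded minimal $\Oo_\infty$-model on $H^*(A)$ (via the weight-graded transfer theorem), then pair the inputs' lower segmentation bounds with the output's upper segmentation bound, using $|\mu_m|=2-m$ and weight preservation, to force $m\leq k+2$ whenever $\mu_m\neq 0$. The paper phrases this as a two-line contradiction, but the inequalities used are exactly yours.
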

\begin{proof}
Let $(H,0,\mu_2,\mu_3,\dots)$ be a weight-graded minimal $\Oo_\infty$-model for $A$. Let $m>k+2$ and assume to get a contradiction that there are elements $x_1,\dots,x_m\in H$ of homogeneous weights and cohomological degrees such that $\mu_m(x_1\ot\cdots\ot x_m)\neq 0$.
By taking into account that 
\[|\mu_m(x_1\ot\dots\ot x_m)| = 2-m+\sum |x_i|,\] it follows that the weight of $\mu_m(x_1\ot\dots\ot x_m)$, satisfies
$$w(\mu_m(x_1,\dots,x_m))\leq \alpha (k+2-m+\sum |x_i|)<\alpha\sum|x_i|.$$

Since $\mu_m$ is weight preserving it follows that that 
$$w(\mu_m(x_1\ot\dots\ot x_n))\geq \alpha\sum|x_i|.$$

The two inequalities above contradict each other, yielding that $\mu_m=0$.
\end{proof}

\begin{rmk}
A similar argument as in the proof of Proposition \ref{prop:semipurityImpliesSemiformality} gives vanishing of Massey products of length $>k+2$ for any dga with an $(\alpha,k)$-segmented weight decomposition.
\end{rmk}

\begin{exmp}
On 
the cohomology of a $d$-dimensional smooth complex algebraic variety, weights arising from mixed Hodge theory are concentrated in the triangle given by the following: Let $p$ denote the weight and $n$ the cohomological degree. Then the range of weights is given by the following inequalities; $p\leq d$ and $n\leq p\leq 2n$, as visualized below.
\begin{center}
    \includegraphics[scale=0.25]{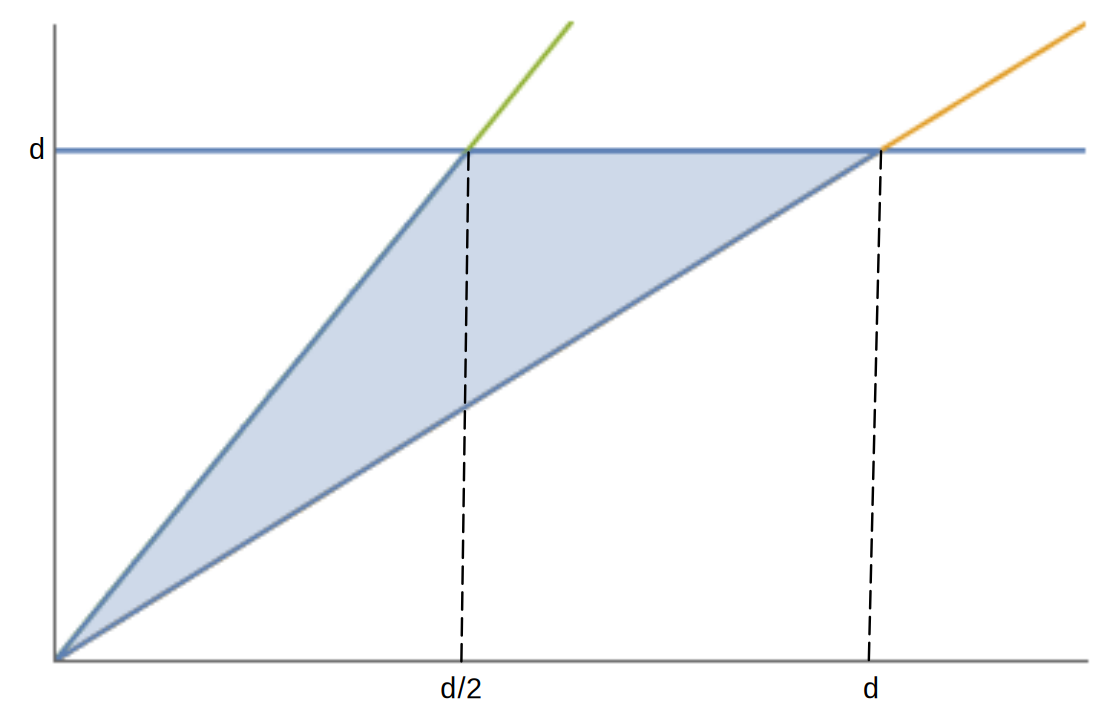}
\end{center}
We see that any $d$-dimensional smooth complex algebraic variety is $(1,d/2)$-segmented, yielding that the ordinary $n$-Massey products vanish for $n>d/2+2$.
\end{exmp}

\begin{exmp}In some situations, there are weight decompositions up to a certain degree, which are segmented up to this degree.
Then the vanishing of higher operations is still true in the corresponding range of degrees.
This is the case, for instance, of any
compact complex manifold admitting a transverse Kähler structure on a fundamental central foliation: by \cite{Kasuya} such a manifold $M$ admits a model which, in degrees $\leq 2$ has a weight decomposition where
$H^1(M)$ is concentrated in weights 1 and 2, and $H^2(M)$ has weights 2, 3 and 4. As a consequence, one infers that
there are no $k$-tuple Massey products of elements in $H^1(M)$ for $k\geq 5$.
\end{exmp}

\subsection{Positive weights}
Positive weights together with finite-dimensional cohomology, lead to segmented weights and so give vanishing results of certain higher operations. 
First, note that there are various equivalent characterizations for a space to have positive weights:

\begin{prop}\label{prop:postive-negative-weights-new} 
Given a nilpotent space $X$ of finite type the following are equivalent:
\begin{enumerate}[label=(\alph*)]
    \item\label{aa} $X$ has a cdga model  with a positive weight decomposition.
    \item\label{bb} $X$ has a cdga model with a weight decomposition such that the weights are positive on cohomology.
    \item\label{cc} $X$ has a dgla model  with a negative weight decomposition.
    \item\label{dd} $X$ has a dgla model with a weight decomposition such that the weights are negative on cohomology.
\end{enumerate}
\end{prop}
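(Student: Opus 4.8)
The plan is to establish the equivalence $(a)\Leftrightarrow(b)$ together with the cycle $(a)\Rightarrow(c)\Rightarrow(d)\Rightarrow(a)$, bridging the commutative and Lie sides through the Chevalley--Eilenberg and Quillen constructions of Section~\ref{subsec:CE-Quillen-constr}. The implications $(a)\Rightarrow(b)$ and $(c)\Rightarrow(d)$ are immediate, since a positive (resp.\ negative) weight decomposition on a model induces one on its cohomology. For $(b)\Rightarrow(a)$ I would invoke Lemma~\ref{cdgamodelpositiveweights}: if $A$ is a cdga model of $X$ whose cohomology carries positive weights, the lemma produces a weight-graded minimal model $\Lambda V\to A$ with a positive weight decomposition, and because $X$ has finite $\Q$-type (so that $H^*(\Lambda V)\cong H^*(X;\Q)$ is degreewise finite-dimensional) this $\Lambda V$ is automatically of finite type. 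This already yields a finite-type, connected, positive cdga model, which I would then feed into the Lie side.

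For $(a)\Rightarrow(c)$ I would take such a finite-type positive $\Lambda V$ and form $\mathscr L(\Lambda V)=\mathscr L^\star((\Lambda V)^\vee)$. Positivity of $\Lambda V$ means its weight-zero part is $\field\mathbf{1}$ in degree $0$, so by Definition~\ref{dualizing} the dual $(\Lambda V)^\vee$ sits in weights $\le 0$ with weight-zero part the counit; hence the reduced coalgebra $\overline{(\Lambda V)^\vee}$ sits in weights $\le -1$, and since weights add under the bracket, $\mathscr L^\star(\overline{(\Lambda V)^\vee})=\L\big(s\,\overline{(\Lambda V)^\vee}\big)$ sits in weights $\le -1$ as well, i.e.\ it carries a negative weight decomposition. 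By Proposition~\ref{prop:koszul-duality} the weight-preserving quasi-isomorphism $\Aa^*_{CE}(\mathscr L(\Lambda V))\to \Lambda V$ exhibits $\mathscr L(\Lambda V)$ as a dgla model of $X$, giving (c). Dually, for $(d)\Rightarrow(a)$, let $L$ be a dgla model of $X$ whose cohomology has negative weights; since $X$ is nilpotent of finite $\Q$-type, $H^*(L)\cong\pi_{-*}(\Omega X)\otimes\Q$ is of finite type, so Proposition~\ref{prop:weightgradedHTT} provides a weight-graded minimal $L_\infty$-model $(\h,\ell_i)$ with $\h=H^*(L)$ concentrated in weights $\le -1$. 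Then $\mathcal C_{CE}^*(\h)=\Lambda^c(s^{-1}\h)$ sits in weights $\le 0$ with weight-zero part just the counit in degree $0$, so its dual $\Aa^*_{CE}(\h)$ sits in weights $\ge 0$ with weight-zero part $\field\mathbf{1}$ in degree $0$, i.e.\ it carries a positive weight decomposition. That $\Aa^*_{CE}(\h)$ is a cdga model of $X$ follows because the weight-preserving $\infty$-$L_\infty$-quasi-isomorphism $\h\rightsquigarrow L$ of Proposition~\ref{prop:weightgradedHTT} induces a weight-preserving quasi-isomorphism $\Aa^*_{CE}(\h)\to\Aa^*_{CE}(L)$ (as in Proposition~\ref{prop:koszul-duality}), and $\Aa^*_{CE}(L)$ is a Sullivan model of $X$.

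The step I expect to require the most care is the bookkeeping of the weight-zero part in cohomological degree $0$: at each passage one must check not merely that weights retain the right sign on (co)homology, but that the weight-zero part stays concentrated in degree $0$, so that one genuinely lands in the class of \emph{positive}/\emph{negative} weight decompositions rather than in the a priori weaker ``positive/negative on cohomology'' condition. For simply connected $X$ this is automatic, as the relevant models vanish in degree $0$; the nilpotent case is where it bites, because degree $0$ records $\pi_1(X)\otimes\Q$, and the crucial point is that a positive cdga model forces $V^1$, hence $\pi_1(X)\otimes\Q\cong H^0(\mathscr L(\Lambda V))$, into strictly negative weights, so that no weight-zero contribution survives into the Chevalley--Eilenberg cdga in the direction $(d)\Rightarrow(a)$. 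The finite-type hypotheses that make the dualization and Chevalley--Eilenberg constructions legitimate are supplied throughout by the finite $\Q$-type assumption on $X$.
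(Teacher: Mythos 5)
Your proof is correct and takes essentially the same route as the paper: Lemma~\ref{cdgamodelpositiveweights} for the equivalence of \ref{aa} and \ref{bb}, and the weight-graded functors $\mathscr L$ and $\Aa^*_{CE}$ together with Propositions~\ref{prop:koszul-duality} and~\ref{prop:weightgradedHTT} for the passage to and from the Lie side, with the implications merely rearranged as a cycle and the weight bookkeeping made explicit. The one blemish is that in \ref{dd}$\Rightarrow$\ref{aa} you compare $\Aa^*_{CE}(\h)$ with $\Aa^*_{CE}(L)$, which is undefined when the dgla model $L$ fails to be of finite type; one should instead compare on the coalgebra side via $\mathcal{C}^*_{CE}(\h)\to\mathcal{C}^*_{CE}(L)$, or relate $\Aa^*_{CE}(\h)$ to $\Aa^*_{CE}$ of $\mathscr L$ applied to a minimal Sullivan model, which is what the paper's equally terse argument implicitly does.
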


\begin{proof} That \textit{\ref{aa}} and \textit{\ref{bb}} are equivalent follows from Lemma \ref{cdgamodelpositiveweights}.
The implication \textit{\ref{aa}} $\Rightarrow$ \textit{\ref{cc}} follows by applying $\mathscr L$ to $A$ if $A$ is of finite type. If $A$ is not of finite type, we might apply $\mathscr L$ to a weight graded minimal Sullivan model of it, again using Lemma \ref{cdgamodelpositiveweights}.

The implication \textit{\ref{cc}} $\Rightarrow$ \textit{\ref{aa}} follows by applying $\Aa^*_{CE}$ to $L$ if it is of finite type. If $L$ is not of finite type, we might apply $\Aa^*_{CE}$ to a minimal $L_\infty$-algebra model of it; such models exist have positive weights by  Proposition \ref{prop:weightgradedHTT}.  Hence \textit{\ref{aa}} $\Leftrightarrow$ \textit{\ref{cc}}.

The implication \textit{\ref{cc}} $\Rightarrow$ \textit{\ref{dd}} is clear. For  \textit{\ref{dd}} $\Rightarrow$ \textit{\ref{cc}}, let $L$ be a dgla model for $X$ where $H^*(L)$ has negative weight decomposition. By Proposition \ref{prop:weightgradedHTT} it follows  that $L$ has a negatively graded minimal $L_\infty$-algebra model $(H^*(L),\{\ell_i\})$. Now the dgla $\mathscr L(\Aa_{CE}^*(H^*(L),\{\ell_i\}))$ is a dgla model for $X$ which is negatively graded.
\end{proof}

In view of the above result, we define:
\begin{dfn}
A topological space is said to have \textit{positive weights} if it admits a Sullivan model with a positive weight decomposition or, equivalently, a Quillen model with a negative weight decomposition.
\end{dfn}

The following is straightforward:
\begin{prop}\label{thm:positive+ftdim}
Let $A$ be a dga  with a positive weight decomposition. If $H^*(A)$ is finite dimensional, then 
there is some $\alpha>0$ and $k\geq 0$, so that $A$ is $(\alpha,k)$-segmented. 
\end{prop}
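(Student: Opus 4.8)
The plan is to show that a positive weight decomposition, combined with finite-dimensionality of cohomology, forces weights on cohomology to lie within a segment of the required form. I would first reduce to cohomology: since $A$ has a weight decomposition, $H^*(A)$ inherits one with $H^n(A)_p = H^n(A_p)$, and positivity of the weight decomposition on $A$ means $A^0$ is concentrated in weight $0$ and $A^i$ is concentrated in positive cohomological degrees for $i \neq 0$. (Here I should be slightly careful: "positive weight decomposition" as defined in the excerpt is the condition that $A^0$ has weight $0$ and $A^i$ lives in positive \emph{degrees} for $i \neq 0$ — I'll want to note that this is the condition ensuring $H^n(A)_p = 0$ for $p \leq 0$ when $n > 0$, via the cofibrant/minimal model of Lemma~\ref{cdgamodelpositiveweights}, or argue directly that for $n>0$ only strictly positive weights occur.) So $H^0(A)$ is concentrated in weight $0$ and for each $n \geq 1$, $H^n(A)$ is concentrated in strictly positive weights.

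Next I would use finite-dimensionality. Let $N$ be the top cohomological degree in which $H^*(A)$ is nonzero (finite since $\dim H^*(A) < \infty$), and let $P$ be the maximal weight appearing anywhere in $H^*(A)$ and $m \geq 1$ the minimal weight appearing in positive degrees (both finite). For the lower bound of the segment I want a linear bound $p \geq \alpha n$: I would take $\alpha$ small enough — e.g. $\alpha = m/N$ — so that $\alpha n \leq m \leq p$ for every $n$ with $1 \leq n \leq N$ and every weight $p$ that actually occurs in degree $n$; in degree $0$ the bound $\alpha \cdot 0 = 0 \leq 0$ holds since the weight there is exactly $0$. For the upper bound $p \leq \alpha(n+k)$: since $p \leq P$ always and $\alpha n \geq 0$, it suffices to choose $k$ with $\alpha k \geq P$, i.e. $k = \lceil P/\alpha \rceil = \lceil PN/m \rceil$. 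Then for every $n$ and every occurring weight $p$ in degree $n$ we have $\alpha n \leq p \leq P \leq \alpha k \leq \alpha(n+k)$, so $A$ is $(\alpha,k)$-segmented.

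The only subtlety — and the one step I'd flag as needing a little care rather than being purely mechanical — is verifying that $H^n(A)_p = 0$ for $p \leq 0$ when $n \geq 1$, i.e. that "positive weight decomposition" in the degree sense of the paper's definition genuinely yields strictly positive weights in positive-degree cohomology. For this I would invoke Lemma~\ref{cdgamodelpositiveweights} (or Proposition~\ref{prop:postive-negative-weights-new}\,\textit{\ref{aa}}$\Leftrightarrow$\textit{\ref{bb}}): replacing $A$ by its weight-graded positive minimal model $\Lambda V \to A$ doesn't change cohomology, and there $V$ is concentrated in positive cohomological degrees, hence (since weights of products and of $V$ are constrained by the generators' weights, which are positive because $A^0$ has weight $0$ forces the weight-$0$ part of $\Lambda V$ to be $\Lambda V^0 = \field$) every element of positive degree has positive weight. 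Once that is in hand the rest is the elementary choice of constants above. The argument does not use anything beyond finite-dimensionality except to extract the finite quantities $N$, $P$, $m$; if $H^*(A)$ were infinite-dimensional these need not exist, which is exactly why the hypothesis is needed.
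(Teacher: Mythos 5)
Your argument is correct and is essentially the intended one: the paper states this proposition without proof (calling it straightforward), and the content is exactly the elementary choice of constants you make — pick $\alpha$ small enough that $\alpha N$ is bounded by the minimal positive weight $m$ occurring in positive-degree cohomology, then pick $k$ so that $\alpha k$ exceeds the maximal weight $P$; finite-dimensionality of $H^*(A)$ is used only to make $N$, $m$, $P$ finite.

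One reading of the definition should be corrected, which also dissolves the ``subtlety'' you flag. In the definition of a positive weight decomposition, the phrase ``$A^i$ is concentrated in positive degrees for $i\neq 0$'' refers to weight degrees, not cohomological degrees (the latter reading would be vacuous, as $A^i$ sits in cohomological degree $i$ by definition). With the correct reading, positivity passes to cohomology immediately: the differential preserves weights, so $H^n(A)_p=H^n(A_p)$, and $A^n_p=0$ whenever $n\geq 1$ and $p\leq 0$, hence $H^n(A)$ has strictly positive weights for $n\geq 1$ while $H^0(A)$ is concentrated in weight $0$. No appeal to Lemma \ref{cdgamodelpositiveweights} or to a minimal model is needed — and such an appeal would in any case be slightly off target, since the proposition concerns a general dga (not necessarily commutative or connected) whereas that lemma is about connected cdgas. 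Finally, note the degenerate case where $H^*(A)$ is concentrated in degree $0$ (so your $m$ and $\alpha=m/N$ are undefined): there any $\alpha>0$ and $k=0$ work, and the empty case is trivial.
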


As a consequence, $k$-tuple Massey products on the cohomology of topological spaces with positive weights 
vanish for sufficiently large $k$.

\section{Applications}
In this last section, we prove our main results on mapping spaces, classifying spaces of homotopy automorphisms
and free and based loop spaces. We also show that nilpotent spaces 
with positive weights always admit minimal dgla models.

\subsection{Mapping spaces} Given a connected space $X$, and a nilpotent space $Y$ of finite $\Q$-type, Berglund constructs in \cite{berglund15} explicit $L_\infty$-algebra models for the different connected components of $\map(X,Y)$.  We show that when $X$ and $Y$ carry weight decompositions, it is possible to equip the Berglund models with a weight decomposition, under certain conditions. This applies, for instance, to components of algebraic maps between complex algebraic varieties or holomorphic maps between compact Kähler manifolds.

We first recall the main constructions treated in \cite{berglund15}.

Given an $L_\infty$-algebra $(L,\{\ell_i\})$ let $\Gamma^kL$ be the span of all possible
elements of $L$ one can form using at least $k$ elements from $L$ and the maps $\ell_1,\ell_2,\dots$ (for instance, $\ell_3(x,\ell_2(y,z),\ell_1(t))$ belongs to $\Gamma^4L$).
This gives a filtration 
$$
L =\Gamma^0L\supseteq \Gamma^1L\supseteq \cdots.
$$
The $L_\infty$-algebra is called \textit{degree-wise nilpotent} if for every cohomological degree $n$ there is some $k$ such that $(\Gamma^k L)^n=0$. Minimal $L_\infty$-models for nilpotent spaces of finite $\Q$-type turn out to be degree-wise nilpotent (\cite[Theorem 2.3]{berglund15}).

The Maurer-Cartan elements of a degree-wise nilpotent $L_\infty$-algebra $(L,\{\ell_i\})$ are given by
\[
\MC(L):=\{\tau\in L^1\ |\ \sum \ell_i(\tau^{\ot i})/i!=0\}.
\]
Note that the sum $\sum \ell_i(t^{\ot i})/i!$ converges due to degree-wise nilpotency.

Given a cdga $A$ and an $L_\infty$-algebra $(L,\{\ell_i\})$, the tensor product $A\ot L$ is an $L_\infty$-algebra $(A\ot L,\{\ell^A_i\})$ where 
$$\ell_i^A((a_1\ot x_1)\ot\cdots\ot(a_i\ot x_i))= \pm a_1\cdots a_i\ot \ell_i(x_1\ot\cdots x_i).$$
If $A$ is connected and bounded and $(L,\{\ell_i\})$ is degree-wise nilpotent, then $(A\ot L,\{\ell_i^A\})$ is again a degree-wise nilpotent $L_\infty$-algebra and it makes sense to consider the set of Maurer-Cartan elements $\MC(A\ot L)$. 

Assume now that $L$ is of finite type. Given an element $\tau\in \MC(A\ot L)$, there is an associated morphism
\[\varphi_\tau\in \Hom_{\mathrm{cdga}}(\Aa^*_{CE}(L),A)\]
defined as follows: Since $L$ is of finite type, we have that the underlying graded algebra structure of  $\Aa^*_{CE}(L) $ is given by the free graded commutative algebra $\Lambda(s^{-1} L)^\vee$. Then for $\xi\in (s^{-1}L)^\vee$ we set \[\varphi_\tau(\xi) :=(1\ot\xi)(\tau).\]

If either $A$ or $L$ is finite dimensional, Berglund \cite[Proposition 6.1]{berglund15} shows  that there is an isomorphism of sets  
\[
\MC(A\ot L)\stackrel{\cong}{\longrightarrow} \Hom_{\mathrm{cdga}}(\Aa^*_{CE}(L),A)
;\quad
\tau\mapsto\varphi_\tau.
\]
Moreover, the equivalence respects homotopy: two elements in $\MC(A\ot L)$ are gauge equivalent if and only if their corresponding maps in $\Hom_{\mathrm{cdga}}(\Aa^*_{CE}(L),A)$ are homotopic (in the sense of \cite[§ 12 (b)]{felixrht}).

\begin{dfn}
Given a weight-graded dgla $L$, let $W_0\MC(L)$ denote the set of weight-zero Maurer-Cartan elements:
 $$W_0\MC(L) := \{\tau\in\MC(L)\ |\ w(\tau)=0\} = \MC(L_0).$$
\end{dfn}

\begin{lemma}
Let $A$ be a weight-graded connected cdga and let $L$ be a weight-graded connected dgla of finite type. Let 
$\tau\in \MC(A\otimes L)$. Then $\tau \in W_0\MC(A\ot L)$ if and only if $\varphi_\tau$ is weight preserving. 
\end{lemma}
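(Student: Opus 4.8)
The plan is to unwind both conditions down to the level of the generating vector space $(s^{-1}L)^\vee$ and compare them directly. First I would recall that, since $L$ is of finite type, the underlying graded algebra of $\Aa^*_{CE}(L)$ is the free graded commutative algebra $\Lambda((s^{-1}L)^\vee)$, with the weight grading on the generators given by $w(\xi) = -w(x)$ when $\xi$ is dual to $x\in s^{-1}L$ (Definition \ref{dualizing}, together with the weight convention $w(s^{-1}x) = w(x)$ from the Chevalley--Eilenberg construction). A cdga map out of a free cdga is weight preserving if and only if it is weight preserving on generators; so $\varphi_\tau$ is weight preserving if and only if $w(\varphi_\tau(\xi)) = w(\xi)$ for every homogeneous $\xi\in (s^{-1}L)^\vee$, i.e. if and only if $w\bigl((1\ot\xi)(\tau)\bigr) = -w(x)$ for $\xi$ dual to the weight-$w(x)$ element $x$.

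Next I would decompose $\tau\in (A\ot L)^1$ into its weight-homogeneous pieces. Write $\tau = \sum_p \tau_p$ with $\tau_p \in (A\ot L)^1_p$; since $A$ and $L$ are both weight-graded and the weight on $A\ot L$ is the sum of weights, each $\tau_p$ is a sum of terms $a\ot x$ with $w(a)+w(x) = p$. Evaluating $(1\ot\xi)$ on $a\ot x$ picks out, up to sign, the coefficient $\langle \xi, x\rangle\, a$; so if $\xi$ is dual to a weight-$q$ element of $s^{-1}L$, then $(1\ot\xi)(\tau)$ is the sum over all $a\ot x$ appearing in $\tau$ with $w(x) = q$ of (a scalar times) $a$, and the $A$-weight of such an $a$ is $w(a) = p - q$ where $p$ is the total weight of the piece $\tau_p$ containing $a\ot x$. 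Hence $(1\ot\xi)(\tau)$ has pure weight equal to $q$ (namely $-w(\xi)$, as required) for \emph{every} generator $\xi$ precisely when only the piece $\tau_0$ is nonzero, i.e. when $w(\tau) = 0$. This is the heart of the argument, and it is genuinely an equivalence: the ``if'' direction is immediate from $\tau = \tau_0$, and for ``only if'' one notes that if $\tau_p \neq 0$ for some $p\neq 0$, then choosing a term $a\ot x$ in $\tau_p$ with $a$ having a nonzero component in $A$-weight $p - w(x) \neq -w(\xi_x)$ (where $\xi_x$ is dual to $x$) forces $\varphi_\tau(\xi_x) = (1\ot\xi_x)(\tau)$ to be non-homogeneous or of the wrong weight, violating weight preservation.

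I expect the only real subtlety — and the step to be most careful about — to be the bookkeeping in this last ``only if'' direction: one must make sure that distinct weight-homogeneous pieces $\tau_p$ cannot conspire to cancel after applying $(1\ot\xi)$, and that the basis of $(s^{-1}L)^\vee$ dual to a weight-homogeneous basis of $s^{-1}L$ really does detect each weight separately. Both follow formally from the fact that $(1\ot\xi)$ for $\xi$ ranging over a dual basis jointly separate points of $A\ot L$ and each is weight-homogeneous (of weight $w(\xi) = -w(x)$) as a map $A\ot L\to A$; so the decomposition of $\varphi_\tau(\xi)$ into $A$-weights is exactly read off from the decomposition of $\tau$ into $(A\ot L)$-weights. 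Everything else — the identification $\varphi_\tau(\xi) = (1\ot\xi)(\tau)$, the freeness of $\Aa^*_{CE}(L)$ as a graded algebra, and the fact that $W_0\MC(A\ot L) = \MC((A\ot L)_0)$ — is already recorded in the excerpt, so the proof is a short direct verification once the weight bookkeeping is set up.
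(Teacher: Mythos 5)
Your proof is correct and follows essentially the paper's argument: reduce to the generators $\xi\in(s^{-1}L)^\vee$, use that a weight-homogeneous functional vanishes off the complementary weight, and compare the weights of the $A$-legs of $\tau$; you additionally make the converse direction explicit (via the decomposition $\tau=\sum_p\tau_p$ and a dual basis separating weights), which the paper's write-up leaves implicit. One sign slip to fix in the middle step: for $\xi$ dual to a weight-$q$ element, weight preservation demands $w(\varphi_\tau(\xi))=w(\xi)=-q$ (as in your first paragraph), so the criterion is that $(1\ot\xi)(\tau)$ has pure weight $-q$, not $q$; this gives $p-q=-q$, i.e.\ $p=0$, which is exactly the conclusion you draw.
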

\begin{proof}
It is enough to prove that $\varphi_\tau$ preserves the weight on an element in the indecomposables $\xi\in (s^{-1}L)^\vee$. Assume $\xi:s^{-1}L\to \Q$ is of weight $n$. Then it increases the weight by $n$ (in this case, it vanishes on elements of weights $w\neq -n$). For a weight zero Maurer-Cartan element 
\[\tau = \sum a_i\ot x_i\in W_0\MC(A\ot L)\] we must have that $w(a_i)+w(x_i)=0$ and hence $w(a_i\ot \xi(s^{-1}x_i))= w(\xi)$. In particular
$$w(\varphi_\tau(\xi))=w(1\ot\xi s^{-1}(\tau)) = w\left(\sum a_i\ot \xi(s^{-1}x_i)\right) =w(\xi).  $$
Hence $\varphi_\tau$ is weight preserving.
\end{proof}

For the following result, we will require two standard constructions on dgla's:

\begin{dfn}
Given a degree-wise nilpotent $L_\infty$-algebra $(L,\{\ell_i\})$ and a Maurer-Cartan element $\tau\in\MC(L)$, let $(L^\tau,\ell_i^\tau)$ denote the $\tau$-twisted $L_\infty$-algebra 
where
$$
\ell_i^\tau(x_1\ot\cdots\ot x_i) = \sum_{k\geq0}\frac1{k!}\ell_{k+i}(\tau^{\ot k}\ot x_1\ot\cdots\ot x_i).
$$
\end{dfn}

\begin{dfn}
The \textit{$n$-connected cover} of a an $L_\infty$ algebra $(L,\{\ell_i\})$, denoted by  $(L\langle n\rangle,\{\ell_i\})$, has an underlying graded vector space given by
 is given by
$$L\langle n \rangle^i=\left\{
\begin{array}{ll}
L^i&\quad\text{if }i<n,\\
\mathrm{ker}(L^n\xrightarrow d L^{n+1})&\quad\text{if }i=n,\\
0&\quad\text{if }i>n.
\end{array}
\right.$$
The structure maps are given by restrictions.
\end{dfn}

We have all the preliminaries to state a weight-graded version of \cite[Theorem 1.5]{berglund15}.

\begin{thm}\label{thm:weights-on-mapping-space}
Let $X$ be a connected space with a weight-graded cdga model $A$ and let $Y$ be a connected nilpotent space of finite $\Q$-type with a weight-graded degree-wise nilpotent $L_\infty$-algebra model $(L,\{\ell_i\})$, where either $A$ or $L$ is finite dimensional.   Given a map $f\colon X\to Y$ whose rational homotopy class corresponds to some  $\tau\in W_0\MC(A\ot L)$, then the $L_\infty$-algebra model given by $(A\ot L)^\tau\langle 0\rangle$ is a weight-graded $L_\infty$-algebra model for $\map^f(X,Y)$.
\end{thm}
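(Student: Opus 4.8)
The plan is to reduce the statement to Berglund's Theorem 1.5 \cite{berglund15} by checking that every construction used there is compatible with the weight gradings, so that the weight decomposition on $A \otimes L$ descends through the twisting and connected-cover operations. First I would recall that, by Berglund's theorem, the $L_\infty$-algebra $(A\otimes L)^\tau\langle 0\rangle$ is an $L_\infty$-model for $\map^f(X,Y)$ whenever $\tau \in \MC(A\otimes L)$ corresponds to the homotopy class of $f$; the hypothesis that either $A$ or $L$ is finite dimensional is exactly what Berglund needs for the Maurer--Cartan/cdga-map correspondence recalled above. So the only thing to add is that $(A\otimes L)^\tau\langle 0\rangle$ is \emph{weight-graded}, i.e.\ all its $L_\infty$-operations $\ell_i^\tau$ preserve weights, and that the quasi-isomorphism type is a \emph{weight-graded} one.

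The key steps, in order, are: (1) Observe that $A\otimes L$ is weight-graded: with $w(a\otimes x) := w(a) + w(x)$, the operations $\ell_i^A((a_1\otimes x_1)\otimes\cdots) = \pm a_1\cdots a_i \otimes \ell_i(x_1\otimes\cdots)$ preserve weights because both the product on $A$ and the $\ell_i$ on $L$ do. (2) Observe that $\tau$ has weight $0$, since by hypothesis $\tau\in W_0\MC(A\otimes L)$. (3) Conclude that twisting by $\tau$ preserves the weight grading: in the formula $\ell_i^\tau(x_1\otimes\cdots\otimes x_i) = \sum_{k\geq 0}\frac{1}{k!}\ell_{k+i}(\tau^{\otimes k}\otimes x_1\otimes\cdots\otimes x_i)$, each summand inserts $k$ copies of the weight-zero element $\tau$, so $w(\ell_{k+i}(\tau^{\otimes k}\otimes x_1\otimes\cdots\otimes x_i)) = k\cdot 0 + \sum w(x_j) = \sum w(x_j)$ by weight-preservation of $\ell_{k+i}$; hence $\ell_i^\tau$ is weight preserving, and $(A\otimes L)^\tau$ is weight-graded. (4) Observe that the $0$-connected cover $(A\otimes L)^\tau\langle 0\rangle$ inherits the weight grading: in degree $i<0$ it is all of $(A\otimes L)^\tau$ (weight-graded by (3)), in degree $0$ it is $\ker(d)$, which is a weight-graded subspace since the differential $\ell_1^\tau$ preserves weights, and in degrees $i>0$ it is $0$; the structure maps being restrictions, they remain weight preserving. (5) Finally, note that the degree-wise nilpotency needed to make sense of all these (infinite) sums is unaffected by the gradings, and that the zig-zag of quasi-isomorphisms exhibiting $(A\otimes L)^\tau\langle 0\rangle$ as a model for $\map^f(X,Y)$ is built (via the correspondence of the preceding Lemma and Berglund's arguments) from weight-preserving maps, using the weight-graded homotopy transfer of Proposition~\ref{prop:weightgradedHTT} where transfer is invoked.

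The genuinely new content is steps (2)--(4); everything else is a citation to \cite{berglund15}. Step (2) is where the hypothesis $\tau\in W_0\MC$ enters essentially: without it the twisted operations would not preserve weights (inserting $\tau$'s of nonzero weight would shift weights by multiples of $w(\tau)$), so the $W_0$ condition is exactly the minimal assumption making the construction weight-graded; this is the point I expect to be the conceptual crux, though it is not technically hard. The only mild subtlety --- and the step I'd be most careful about --- is checking that the \emph{equivalence} of models, not just the algebra structure, is weight-graded: one must trace through Berglund's identification and confirm that each intermediate $L_\infty$-algebra and each $\infty$-morphism in his argument respects weights, invoking Proposition~\ref{prop:koszul-duality} and Proposition~\ref{prop:weightgradedHTT} at the points where Koszul duality or homotopy transfer are used. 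This is routine given the weight-graded machinery set up in Section 2, but it is the place where a gap could hide, so I would spell it out rather than assert it.
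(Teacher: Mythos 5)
Your proposal is correct and takes essentially the same route as the paper: the paper's proof simply observes that twisting a weight-graded $L_\infty$-algebra by a weight-zero Maurer--Cartan element again yields a weight-graded $L_\infty$-algebra, and cites Berglund's work for the fact that $(A\ot L)^\tau\langle 0\rangle$ models $\map^f(X,Y)$. Your extra worry in step (5) about the comparison zig-zag being weight-preserving is not needed, since ``weight-graded model'' here only asks that the model itself carry a weight decomposition, the model property being the unweighted statement from Berglund.
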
 
\begin{proof}
If $\g$ is a weight-graded $L_\infty$-algebra and $\tau\in W_0\MC(\g)$, then $\g^\tau$ is also a weight-graded $L_\infty$-algebra. The rest follows from  \cite[§ 6]{berglund15}.
\end{proof}

\begin{rmk}As shown in Example \ref{examvarsE1} below, algebraic varieties always have finite-dimensional models and so the above theorem applies.
An alternative description for algebraic models of mapping spaces, where finite-dimensionality of the initial algebraic models is not necessary, is given in Proposition 1.9 of \cite{CPRW}. This description might be convenient in order to study the whole mixed Hodge structure on mapping spaces of algebraic varieties.
\end{rmk}

We review two main examples where the above theorem applies:

\begin{exmp}\label{examkahlermaps}
Let $f:X\to Y$ be a formal map between topological spaces of finite type. For instance, this situation applies to any holomorphic map between compact Kähler manifolds.
The map \[f^*:A(Y):=H^*(Y)\to A(X):=H^*(X)\] is a finite-dimensional model of $f$ which is weight-graded by letting \[A^n_n(X)=H^n(X)\text{ and }A^n_p=0\text{ otherwise}.\]
Let $L_Y$ denote the minimal $L_\infty$-algebra model with the formality induced weight decomposition. Since $L_Y$ is degree-wise nilpotent, it follows that $\Aa_{CE}^*(L_Y)$ is a minimal model of $A(Y)$ as weight graded algebras. We obtain a weight preserving morphism \[\varphi:\Aa^*_{CE}(L_Y)\to A(X).\]
The Maurer-Cartan element $\tau_\varphi\in \MC(H^*(X)\ot L_Y)$ corresponding to $\varphi$ will then have weight zero.
\end{exmp}

\begin{exmp}\label{examvarsE1}Let $f:X\to Y$ be a morphism of connected complex algebraic varieties. Such a map is modelled by the morphism of bigraded cdga's
\[f^*:(E_1(Y),d_1)\longrightarrow (E_1(X),d_1)\] given by the multiplicative weight spectral sequence (see \cite{CG1}). It is a finite-dimensional model and moreover admits a weight decomposition by rearranging the bidegrees:
let \[A^n_p(X):=E_1^{n-p,p}(X).\] Then the map
$f^*:A(Y)\to A(X)$ is a finite-dimensional weight-graded cdga model of $f$. Furthermore, in the case of smooth algebraic varieties (not necessarily projective) the weight decompositions are positive.
Arguing as in the previous example, we obtain a weight preserving morphism \[\varphi:\Aa^*_{CE}(L_Y)\to A(X).\]
The Maurer-Cartan element corresponding to $\varphi$ will then have weight zero. 
Note that in the case of a smooth projective variety $X$ we recover the weight decomposition of the previous example.
\end{exmp}

\begin{exmp}
 Let $e\colon \CP^k\hookrightarrow \CP^{k+n}$, $n\geq 1$, be the standard embedding which is modelled by the projection \[\Q[u]/u^{k+n+1}\to \Q[u]/u^{k+1},\] which is a weight preserving morphism.
 Hence $\map^e(\CP^k, \CP^{k+n})$ has weight graded algebraic models by Theorem \ref{thm:weights-on-mapping-space} where the weights are inherited from the formality induced weights on the algebraic models for $\CP^k$ and $\CP^{k+n}$. We will show that $\map^e(\CP^k,\CP^{k+n})$ has a minimal $\mathcal Com_\infty^{\leq k+2}$-model by endowing a Sullivan minimal model, computed in \cite[§ 7]{berglund15}, with weights.
A minimal model for $\map^e(\CP^k,\CP^{k+n})$ is given
\[A=\Lambda(z,y_n,y_{n+1},\dots, y_{n+k}),\quad |z|=2,\ |y_r|=2r+1,\quad dz=0,\ dy_r =z^{r+1}\]
(see \cite{berglund15} for further details).
By explicit computations using the formality induced weights one gets that
$$w(z) = 2,\ w(y_r)=2r+2.$$

We will prove that $A$ is $(1,k)$-segmented, and therefore has a minimal $\mathcal Com_\infty^{\leq k+2}$-model by Proposition \ref{prop:semipurityImpliesSemiformality}. 
First we observe that $w(z)= |z|$ and that $w(y_j) = |y_j|+1$. Hence 
$$w(z^a y_{i_1}\cdots y_{i_p}) = |z^a y_{i_1}\cdots y_{i_p}| + p.$$
Since $y_j^2= 0$, it follows that if $p>k+1$ then $z^a y_{i_1}\cdots y_{i_p}=0$. From this we may deduce $(1,k+1)$-segmentation. However, we observe that any element of cohomological degree $\ell$ and of weight $\ell +k+1$ is a multiple of $z^ay_n\cdots y_{n+k}$ (where $a=\ell-\sum_{i=n}^{n+k}(2i+1)$) and those elements are non-cycles. Hence there is no cohomology class of cohomological degree $\ell$ and of weight $\ell+k+1$. From this we conclude the $(1,k)$-segmentation.
\end{exmp}

\begin{rmk}
Note that the deduction above is not sharp: weights imply that there is a $\Com_\infty^{\leq k+2}$-model for $\map^e(\CP^k, \CP^{k+n})$ but, in fact, $\map^e(\CP^k, \CP^{k+n})$ is formal. Indeed,
consider the cdga $B=\Lambda(z,y_n,w_{n+1},\dots,w_{n+k})$ where $z$ and $y_n$ is as before but with $|w_r|= 2r+1$ and $dw_r = 0$. Define a morphism $\varphi\colon A\to B$ where
\[\varphi(z)=z,\, \varphi(y_n)=y_n\text{ and }\varphi(y_{n+i}) = w_{n+i}+z^{i}y_n.\]
An inverse $\psi\colon B\to A$ is given by
\[\psi(z)=z,\,\psi(y_n)=y_n\text{ and }\psi(w_{n+i})= y_{n+i}-z^iy_n.\] In particular $B$ is a cdga model for $\map^e(\CP^k,\CP^{k+n})$. Moreover we can decompose $B$ as the tensor product
$\Lambda(z,y_n)\ot \Lambda(w_{n+1})\ot\cdots\Lambda(w_{n+k})$ which is a  model for the product
\[\CP^n\times S^{2n+3}\times S^{2n+5}\times\cdots\times S^{2(k+n)+1}.\]
\end{rmk}

In the next two results we provide conditions that guarantee positive weights for certain components of mapping spaces.

\begin{prop}\label{posweightsgeneral} Let $X$ be a connected space and $Y$ be a nilpotent space of finite $\Q$-type. Let $\Lambda V$
 be a minimal model for $X$ and let $L_Y$ be a minimal model $L_\infty$-algebra model for $Y.
 $\begin{enumerate}[label=(\alph*)]
    \item\label{item:formal-positive-weights} If $X$ and $Y$ are formal and $\dim(H^*(X;\Q))<\infty$ and  $f\colon X\to Y$ corresponds to some  weight-zero Maurer-Cartan element $\tau\in W_0\MC(H^*(X;\Q)\ot L_Y)$ with respect to the formality induced weights, then  $\map^{f}(X,Y)$ has positive weights. 
    \item\label{item:coformal-positive-weights} If $X$ and $Y$ are  coformal and $\dim(\pi_*^\Q(Y))<\infty$ and $f\colon X\to Y$ corresponds to some  weight-zero Maurer-Cartan element $\tau\in W_0\MC(\Lambda V\ot \pi_*^\Q(Y))$ with respect to the coformality induced weights, then $\map^{f}(X,Y)$ has positive weights. 
\end{enumerate}
\end{prop}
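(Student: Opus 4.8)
The plan is to reduce both statements to the weight-graded Berglund model of Theorem~\ref{thm:weights-on-mapping-space} together with the degree-weight inequality of Proposition~\ref{prop:weight<deg}, and then verify that the twisting and connected-cover operations preserve the strict inequality $w(x) < |x|$ on the relevant model. Concretely, for part~\ref{item:formal-positive-weights} I would take as the $L_\infty$-model of $Y$ the minimal $L_\infty$-model $L_Y$ with the formality induced weights, and as the cdga-model of $X$ the cohomology $H^*(X;\Q)$ with $w = |\cdot|$. By Theorem~\ref{thm:weights-on-mapping-space}, since $\tau \in W_0\MC(H^*(X;\Q)\ot L_Y)$, the $L_\infty$-algebra $(H^*(X;\Q)\ot L_Y)^\tau\langle 0\rangle$ is a weight-graded model for $\map^f(X,Y)$. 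To conclude positive weights it suffices, by Proposition~\ref{prop:postive-negative-weights-new}\ref{dd}, to exhibit any dgla (or $L_\infty$) model whose cohomology has a negative weight decomposition; equivalently, to show the model above has strictly negative weights in cohomology.

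The first key step is the inequality on the tensor product. On $H^*(X;\Q)$ one has $w(a) = |a|$, while Proposition~\ref{prop:weight<deg}\ref{item:formal} gives $w(x) < |x|$ for every nonzero homogeneous $x \in L_Y$ (the remark following it extends this to $H^*(L_Y)$, hence it holds on the minimal $L_\infty$-model itself). For a homogeneous tensor $a\ot x$ we get
\[
w(a\ot x) = w(a) + w(x) = |a| + w(x) < |a| + |x| = |a\ot x|,
\]
so the tensor product $L_Y^A := H^*(X;\Q)\ot L_Y$ satisfies $w(\cdot) < |\cdot|$ on all nonzero homogeneous elements. The second step is to check that passing to the $\tau$-twist $\langle\cdot\rangle^\tau$ and then the $0$-connected cover does not destroy this: twisting by a weight-zero Maurer-Cartan element changes neither the underlying weight-graded vector space nor the grading (only the structure maps $\ell_i^\tau$, which are still weight preserving), and the connected cover is a weight-graded subquotient concentrated in degrees $\le 0$; on its cohomology the inequality $w < |\cdot|$ persists and, since the model sits in non-positive degrees, this forces strictly negative weights in every nonzero cohomological degree — and weight zero can only occur in cohomological degree $0$. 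Hence by Proposition~\ref{prop:postive-negative-weights-new} the component $\map^f(X,Y)$ has positive weights. Part~\ref{item:coformal-positive-weights} is entirely dual: one uses the coformal Sullivan minimal model $\Lambda V$ of $X$ together with $\pi_*^\Q(Y)$ as the (abelian, hence $L_\infty$-) model of $Y$, invokes Proposition~\ref{prop:weight<deg}\ref{item:coformal} to get $w(v) < |v|$ on $\Lambda V$, and runs the same argument, now producing a cdga model with positive weights via Proposition~\ref{prop:postive-negative-weights-new}\ref{bb}.

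The main obstacle I anticipate is the bookkeeping around degree~$0$: the strict inequality $w(x) < |x|$ together with residence in non-positive cohomological degrees is exactly what rules out weight-zero classes outside degree~$0$, but one must be careful that the $0$-connected cover (which retains $\ker(d)$ in degree~$0$) does not spuriously contribute weight-zero elements in positive degree — it cannot, since it has nothing in positive degree — and, in the coformal case, that the Sullivan side genuinely has $A^0$ of weight $0$ and $A^i$ ($i\neq 0$) in strictly positive degrees, which again follows from $w < |\cdot|$ on $\Lambda V$ after dualizing. A secondary point to handle cleanly is that Proposition~\ref{prop:weight<deg} is stated for the minimal models of $X$ and $Y$ individually, so one should first replace the abstract finite-dimensional models by these minimal ones (legitimate since weight-graded minimal models are unique up to weight-graded isomorphism by Proposition~\ref{prop:weightgradedHTT}), and transport the Maurer-Cartan element accordingly.
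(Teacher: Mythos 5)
Your proposal is correct and follows essentially the same route as the paper: the weight-graded Berglund model $(A\ot L_Y)^\tau\langle 0\rangle$ from Theorem~\ref{thm:weights-on-mapping-space}, the inequality $w(a\ot b)=w(a)+w(b)<|a|+|b|\leq 0$ coming from Proposition~\ref{prop:weight<deg}, and the conversion of negative Lie weights into positive weights via Proposition~\ref{prop:postive-negative-weights-new}; the paper's proof is just a terser version of this, with your extra remarks on the weight-zero twist and the connected cover being the implicit bookkeeping. One harmless slip: in the coformal case $\pi_*^\Q(Y)$ is the homotopy Lie algebra with the Samelson bracket (zero differential), not an abelian $L_\infty$-algebra, but your argument nowhere uses abelianness.
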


\begin{proof} \textit{\ref{item:formal-positive-weights}}
Let $a\ot b\in H^*(X)\ot L_Y$ be an element of non-positive cohomological degree, so that $|a|+|b|\leq0$. Since $X$ and $Y$ are formal we have $w(a)= |a|$ and $w(b)<|b|$ by Proposition \ref{prop:weight<deg} \textit{\ref{item:formal}}. In particular $w(a\ot b) = w(a)+w(b)<0$. From this we conclude that $(H^*(X;\Q)\ot L_Y W)^\tau\langle 0\rangle$ is a dgla model  for $\map^f(X,Y)$ with negative weights, and thus $\map^f(X,Y)$ admits  a cdga model with positive weights by Proposition \ref{prop:postive-negative-weights-new}.
Assertion \textit{\ref{item:coformal-positive-weights}}  is proved similarly, using Proposition \ref{prop:weight<deg} \textit{\ref{item:coformal}} instead.
\end{proof}

Theorem \ref{second} in the introduction for holomorphic maps between Kähler manifolds now follows from Proposition \ref{posweightsgeneral} together with Example \ref{examkahlermaps}.

 We may now prove Theorem \ref{first} in the introduction.
Note that in order to get a non-trivial weight decomposition on the tensor product $A\ot L$, it is enough that one of the algebras has a non-trivial weight decomposition. Assuming that $L$ has a negative weight decomposition and $A$ has a trivial weight decomposition, we get that $(A\ot L)\langle 0\rangle$ has a negative weight decomposition. The only weight-zero Maurer-Cartan element is thus the trivial one, which corresponds to the constant map.

\begin{thm}
Let $X$ be of the homotopy type of a finite CW-complex and let $Y$ be a connected nilpotent space of finite $\Q$-type having positive weights. Let $\Psi\colon X\to Y$ denote a constant map. Then $\map^{\Psi}(X,Y)$ has positive weights. 
\end{thm}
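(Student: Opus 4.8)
The plan is to build an explicit weight-graded algebraic model for the component $\map^{\Psi}(X,Y)$ out of Berglund's model $(A\ot L)\langle 0\rangle$ and to observe that its weights are negative on cohomology, so that the implication \textit{\ref{dd}}$\Rightarrow$\textit{\ref{aa}} of Proposition \ref{prop:postive-negative-weights-new} applies.

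First I would fix the input data. Since $Y$ is a connected nilpotent space of finite $\Q$-type with positive weights, Proposition \ref{prop:postive-negative-weights-new} provides a dgla model of $Y$ with a negative weight decomposition, and transferring along Proposition \ref{prop:weightgradedHTT} yields a \emph{minimal} $L_\infty$-algebra model $(L,\{\ell_i\})$ of $Y$ carrying a negative weight decomposition; as a minimal $L_\infty$-model of a nilpotent space of finite $\Q$-type it is degree-wise nilpotent and of finite type. Since $X$ has the homotopy type of a finite CW-complex it has a finite-dimensional connected cdga model $A$, which I would endow with the \emph{trivial} weight decomposition $A=A_{0}$ (everything in weight $0$). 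Then $A\ot L$ is a degree-wise nilpotent weight-graded $L_\infty$-algebra with $w(a\ot x)=w(x)$, and the constant map $\Psi$ is represented by $\tau=0\in W_{0}\MC(A\ot L)$ (the associated cdga map $\Aa^{*}_{CE}(L)\to A$ being the composite of the augmentation with the unit, which models the constant map). By Theorem \ref{thm:weights-on-mapping-space}, $M:=(A\ot L)\langle 0\rangle$ — the twist by $0$ being trivial — is then a weight-graded $L_\infty$-algebra model for $\map^{\Psi}(X,Y)$.

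It remains to read off the weights on $M$. The key point is that $A$ is connected, so $A^{p}=0$ for $p<0$; hence every summand $A^{p}\ot L^{q}$ occurring in $(A\ot L)^{n}$ has $q=n-p\leq n$. When $n<0$ this forces $q<0$, and negativity of the weights on $L$ then gives $w(A^{p}\ot L^{q})=w(L^{q})<0$; thus $(A\ot L)^{n}$, and with it $M^{n}$ and $H^{n}(M)$, lies in strictly negative weights for $n<0$. In cohomological degree $0$ one checks similarly that $H^{0}(M)=H^{0}(A\ot L)$ sits in non-positive weights, with weight-zero part isomorphic to $L^{0}$ (which vanishes when $Y$ is simply connected, and in general models $\pi_{1}(Y)$). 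Hence $M$ is an $L_\infty$-model (and, after applying $\mathscr L\circ\Aa^{*}_{CE}$, a dgla model) for $\map^{\Psi}(X,Y)$ whose cohomology carries negative weights, and Proposition \ref{prop:postive-negative-weights-new} lets me conclude that $\map^{\Psi}(X,Y)$ has positive weights.

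I expect the only real friction to be bookkeeping rather than mathematics: one must make sure that a finite CW-complex genuinely admits a finite-dimensional (or at least bounded) connected cdga model, so that Theorem \ref{thm:weights-on-mapping-space} is applicable, and one must be slightly careful in cohomological degree $0$, where the weight-zero part of $H^{0}(M)$ is genuinely nonzero in the non-simply-connected case but is exactly the weight-zero contribution permitted by a negative weight decomposition. Otherwise the argument is a short weight count on $(A\ot L)\langle 0\rangle$, formally parallel to the proof of Proposition \ref{posweightsgeneral} with the trivial weight decomposition on the source in place of the formality- or coformality-induced one.
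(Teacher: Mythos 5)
Your argument is essentially the paper's own proof: a finite-dimensional connected cdga model $A$ of $X$ with the trivial weight decomposition, a negative-weight $L_\infty$-model $L$ of $Y$ obtained via Proposition \ref{prop:postive-negative-weights-new}, the identification of the constant map with the (weight-zero) trivial Maurer--Cartan element, and the observation that $(A\ot L)\langle 0\rangle$ then carries negative weights, so Proposition \ref{prop:postive-negative-weights-new} applies. Your explicit weight count in degrees $n<0$ and the remark about the weight-zero part in degree $0$ only spell out what the paper leaves implicit, so the two proofs coincide in substance.
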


\begin{proof}
Since $X$ is of the homotopy type of a finite CW-complex, it has a finite dimensional connected cdga model $A$, see for instance \cite[Example 6, p. 146]{felixrht}. Endow $A$ with the trivial weight decomposition. Since the Sullivan minimal model for $Y$ has a positive weight decomposition, it follows from Proposition \ref{prop:postive-negative-weights-new} that $Y$ has an $L_\infty$-algebra model $L$ with a negative weight decompositon. Therefore by Proposition \ref{posweightsgeneral}, $(A\ot L_Y)^\mathrm{triv}\langle0\rangle$, where $\mathrm{triv}$ is the trivial Maurer-Cartan element, is an $L_\infty$-algebra model for $\map^{\Psi}(X,Y)$ which admits a negative weight decomposition.
\end{proof}

\subsection{Classifying spaces of homotopy automorphisms}\label{sec:applII}
For a topological space $X$, let $\aut(X)$ ($\aut_*(X)$) denote the topological monoid of (pointed) homotopy automorphisms, i.e. the space of (pointed) endomorphisms $\varphi\colon X\to X$ that are homotopy equivalences. If $X$ is of the homotopy type of a finite CW-complex, then there is a universal $X$-fibration 
$$
X\to B\aut_*(X)\to B\aut(X).
$$

Given a map $f\colon B\to B\aut(X)$, we may apply the based loop space functor which up top homotopy yields a map $\Omega f\colon \Omega B\to \aut(X)$. If $B$ is simply connected, then $\Omega B$ is connected, and hence $\Omega f$ factors through the connected component of $\aut(X)$ that contains the identity. We denote this connected component by $\aut_\circ(X)$, which again is a topological monoid. Delooping this factorization gives that $f$ factors through $B\aut_\circ(X)$ if $B$ is simply connected. Hence $B\aut_\circ(X)$ classifies $X$-fibrations over simply connected spaces, via the $X$-fibration 
\[X\to B\aut_{*,\circ}(X)\to B\aut_{\circ}(X),\] where $\aut_{*,\circ}(X)$ denotes the connected component of the identity in $\aut_*(X)$.

Since $B\aut_\circ(X)$  and $B\aut_{*,\circ}(X)$ are simply connected and of finite type, they have rational cdga and dgla models. The dgla models are expressed in terms of derivations of certain algebras.
 
\begin{dfn}
Let $A$ be a cdga, then $\Der(A)$ is the \textit{dgla of graded derivations},
$$
\Der(A) = \{\theta\colon A\to A\ | \ \theta(a\cdot b) = \theta(a)\cdot b + (-1)^{|\theta||a|}a\cdot \theta(b)\},
$$
where the Lie bracket is given by 
$$
[\theta, \eta] = \theta\circ \eta - (-1)^{|\theta||\eta|}\eta\circ \theta
$$ and the differential $\partial\colon \Der(A)\to \Der(A)$ is given by $[d,-]$ where $d$ is the differential of $A$.

Similarly, one may define the \textit{dgla of derivations on a dgla $L$}. A derivation of a dgla $L$ is a linear map $\eta\colon L\to L$, such that \[\eta[a,b]= [\eta(a),b]+(-1)^{|\eta||a|}[a,\eta(b)].\]
The dgla of derivations on a dgla $L$ is denoted by $\Der(L)$.
\end{dfn}

\begin{lemma}\label{lemma:weight-on-derivations} Let $\Lambda V$ (resp. $\L W$) be a weight-graded cofibrant minimal cdga (resp. dgla). Then there  is an induced weight grading on the dgla $\Der(\Lambda V)$ (resp. $\Der(\L W)$) so that \[\Der(\Lambda V)\cong \Hom(V,\Lambda V)\quad\text{(resp. }\Der(\L W)\cong \Hom(W,\L W)\text{ )}\] is an isomorphism of bigraded  vector spaces.
\end{lemma}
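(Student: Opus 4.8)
The statement is essentially a bookkeeping result: a derivation of a free (graded-commutative or free-Lie) algebra is determined by where it sends the generators, and the weight grading on the generators propagates to a weight grading on the derivation dgla. So the plan is: (1) recall the classical isomorphism $\Der(\Lambda V)\cong\Hom(V,\Lambda V)$, sending a derivation $\theta$ to its restriction $\theta|_V\colon V\to\Lambda V$; (2) transport the weight grading along this isomorphism by \emph{defining} the weight of an element of $\Hom(V,\Lambda V)$ in the obvious way; (3) check that the resulting grading is compatible with the differential $\partial=[d,-]$ and the commutator bracket on $\Der(\Lambda V)$, so that it is genuinely a weight decomposition in the sense of Definition \ref{Def_cdga_weight} (adapted to dgla's). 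The argument for $\L W$ is word-for-word the same with $\Hom(V,\Lambda V)$ replaced by $\Hom(W,\L W)$ and the commutative product replaced by the Lie bracket.

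\medskip

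\emph{Step 1 (the underlying vector space isomorphism).} First I would note that since $\Lambda V$ is free as a graded-commutative algebra, any linear map $V\to\Lambda V$ extends uniquely to a derivation by the Leibniz rule, and conversely a derivation is determined by its restriction to $V$; this is standard (see e.g. \cite[\S 11]{felixrht}). This gives a bijection $\Der(\Lambda V)\xrightarrow{\cong}\Hom(V,\Lambda V)$, linear in each degree. Here $\Hom(V,\Lambda V)$ is bigraded: $\Hom(V,\Lambda V)$ in cohomological degree $n$ consists of maps raising cohomological degree by $n$, and we declare $f\in\Hom(V,\Lambda V)$ to have weight $p$ if $f(V_q)\subseteq(\Lambda V)_{q+p}$ for all $q$ — equivalently, $f$ is a sum of maps each sending one weight-homogeneous piece $V_q$ into $(\Lambda V)_{q+p}$. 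Since $V$ is finite type in each bidegree (cofibrant, built from finite-dimensional $V_i$), $\Hom(V,\Lambda V)$ decomposes as a direct sum over $p\in\Z$ of these weight-homogeneous pieces, giving a weight decomposition of the graded vector space.

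\medskip

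\emph{Step 2 (compatibility with differential and bracket).} Here I transport the grading: a derivation $\theta$ has weight $p$ iff $\theta|_V\in\Hom(V,\Lambda V)$ has weight $p$, iff $\theta((\Lambda V)_q)\subseteq(\Lambda V)_{q+p}$ for all $q$ (the last equivalence because $d$... wait, because the product on $\Lambda V$ is weight-additive, so a derivation sending generators of weight $q$ into weight $q+p$ sends all of weight $q'$ into weight $q'+p$). Then: the bracket $[\theta,\eta]=\theta\eta-(-1)^{|\theta||\eta|}\eta\theta$ of a weight-$p$ and a weight-$p'$ derivation shifts weights by $p+p'$, so $w([\theta,\eta])=w(\theta)+w(\eta)$; and $\partial\theta=[d,\theta]$ has the same weight as $\theta$ since $d$ is weight-zero ($w(d)=0$ because $\Lambda V$ is weight-graded). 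Thus the weight decomposition on $\Der(\Lambda V)$ satisfies the dgla analogue of Definition \ref{Def_cdga_weight}, and the bijection of Step 1 is an isomorphism of bigraded vector spaces by construction.

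\medskip

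\emph{Main obstacle.} There is no deep obstacle; the content is entirely formal. The one point that needs a little care is the convergence/direct-sum issue in Step 1: $\Hom(V,\Lambda V)$ is a product over generators and a sum over monomials, so one must use that in each fixed cohomological degree only finitely many weights $p$ occur (which follows from $V$ being a cofibrant minimal cdga built from finite-type pieces, so that the weights are bounded below, and from $\Lambda V$ being finite type in each bidegree), to conclude that the weight-homogeneous decomposition is an honest direct sum in each cohomological degree. For the Lie case one additionally uses that $\L W$ is free, so derivations correspond to linear maps $W\to\L W$, and the Lie bracket — like the commutative product — is weight-additive, so the same argument applies verbatim.
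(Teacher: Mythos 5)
Your proposal is correct and follows essentially the same route as the paper: transfer the weight grading along the standard isomorphism $\Der(\Lambda V)\cong\Hom(V,\Lambda V)$ (a derivation has weight $p$ if it raises weights by $p$), and then observe that the bracket is weight-additive and that $\partial=[d,-]$ preserves weights because $d$ does. Your extra remark on the direct-sum issue for $\Hom(V,\Lambda V)$ is a finer point than the paper addresses, but it does not change the argument.
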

\begin{proof} We prove the statement for the derivations on the cofibrant minimal cdga. The statement regarding the derivations on the minimal dgla is proved similarly.

By definition, if $\Lambda V$ is a weight-graded minimal cdga, then $V$ is weight-graded. Hence the mapping space $\Hom(V,\Lambda V)$ has an induced weight grading; a linear map is of weight $w$ if it raises the weight by $w$. There is an isomorphism of vector spaces $\Hom(V,\Lambda V)\cong \Der(\Lambda V)$, which induces a weight grading on $\Der(\Lambda V)$. A derivation is of weight $n$ if it increases the weight by $n$.

The Lie bracket on $\Der(\Lambda V)$ preserves the weight.  
Since the differential on $\Lambda V$ preserves weights, the differential $\partial = [d,-]$  on $\Der(\Lambda V)$ is also weight preserving. 
\end{proof}

\begin{thm}\label{htpautpos}
Let $X$ be a simply connected space of the homotopy type of a finite CW-complex. If $X$ is formal or coformal, then  $B\aut_\circ(X)$ and $B\aut_{*,\circ}(X)$ have positive weights.
\end{thm}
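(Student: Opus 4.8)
The plan is to exhibit, for each of $B\aut_\circ(X)$ and $B\aut_{*,\circ}(X)$, a dgla model carrying a weight decomposition whose induced weights on cohomology are negative; positive weights then follow from Proposition \ref{prop:postive-negative-weights-new}. Recall the standard dgla models of these classifying spaces in terms of derivations of a minimal model of $X$ (see \cite{LuSm} and the references therein): $B\aut_\circ(X)$ has a dgla model given by a suitable positive truncation $\Der_+$ of the derivation dgla of a minimal model of $X$, and $B\aut_{*,\circ}(X)$ by the sub-dgla of those derivations preserving the augmentation ideal (equivalently, the two are linked by the universal fibration $X\to B\aut_{*,\circ}(X)\to B\aut_\circ(X)$). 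When $X$ is coformal I would take for this minimal model the Sullivan minimal model $\Lambda V$, and when $X$ is formal the Quillen minimal model $\L W$; this is the crucial choice, since by Proposition \ref{prop:weight<deg} the coformality- (resp.\ formality-) induced weights satisfy $w(x)<|x|$ on all of $\Lambda V$ (resp.\ $\L W$), and moreover, as in Remark \ref{rmk:induced-weights-on-indec} and the proof of that proposition, the generators satisfy $w(x)=|x|-1$, so that a monomial (resp.\ bracket monomial) built from $k\ge 1$ generators has weight equal to its cohomological degree minus $k$, in particular at most its degree minus $1$.

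Next I would equip $\Der(\Lambda V)\cong\Hom(V,\Lambda V)$ (resp.\ $\Der(\L W)\cong\Hom(W,\L W)$) with the induced weight grading of Lemma \ref{lemma:weight-on-derivations}, and observe the following one-line estimate. Let $\theta$ be a weight-homogeneous derivation of cohomological degree $|\theta|\le -1$ — these are the only ones occurring in $\Der_+$ — and let $x$ be a homogeneous generator with $\theta(x)\ne 0$. If $\theta(x)$ is decomposable, then $w(\theta(x))\le |\theta(x)|-1=|x|+|\theta|-1$ by the previous paragraph, whence $w(\theta)=w(\theta(x))-w(x)\le|\theta|\le -1<0$. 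The only other possibility — which can occur only in the Sullivan case, since the free graded Lie algebra $\L W$ is concentrated in strictly negative degrees — is that $\theta$ takes values in $\kk$, in which case $w(\theta)=-w(x)=1-|x|<0$ because $X$ is simply connected. Hence every weight-homogeneous derivation in $\Der_+$, and a fortiori in any sub-dgla of it, has strictly negative weight; since $B\aut_\circ(X)$ and $B\aut_{*,\circ}(X)$ are simply connected, their models have vanishing degree-zero cohomology, so the induced weights on cohomology are negative. By Proposition \ref{prop:postive-negative-weights-new}, both $B\aut_\circ(X)$ and $B\aut_{*,\circ}(X)$ have positive weights.

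The step I expect to demand the most care is not the weight estimate above, which is essentially the inequality $w(\theta)\le|\theta|$, but the bookkeeping around the models: pinning down from the literature the precise dgla models of $B\aut_\circ(X)$ and $B\aut_{*,\circ}(X)$ in both the Sullivan- and the Quillen-derivation incarnations, verifying that the relevant truncations and the extension dictated by the universal fibration are weight-graded sub- or quotient dgla's (for the full derivation dgla this is Lemma \ref{lemma:weight-on-derivations}, but one should also check it on the truncation and, if one passes to transferred minimal $L_\infty$-models via Proposition \ref{prop:weightgradedHTT}, there too), and keeping track of the mild nuisance of field-valued derivations, which only arise on the commutative side. With these in place the argument reduces to the short computation above.
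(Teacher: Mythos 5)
Your central weight estimate is correct and rather elegant: with the crossed pairing (coformality-induced weights on $\Lambda V$, formality-induced weights on $\L W$) every monomial of word length $k$ has weight equal to its cohomological degree minus $k$, so $w(\theta)\le|\theta|<0$ for any weight-homogeneous derivation of negative degree, with the field-valued case handled by simple connectivity. The genuine gap is in your identification of the models, and it is not mere bookkeeping. The classical derivation models are asymmetric: the truncated Sullivan-derivation dgla $\Der(\Lambda V)\langle -1\rangle$ models the \emph{unpointed} $B\aut_\circ(X)$, whereas the truncated Quillen-derivation dgla $\Der(\L W)\langle -1\rangle$ models the \emph{pointed} $B\aut_{*,\circ}(X)$ (Sullivan, Schlessinger--Stasheff, Tanr\'e; this is how the paper invokes them). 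Your blanket claim that, for either minimal model, the full truncated derivation dgla models $B\aut_\circ(X)$ while the sub-dgla of derivations preserving the augmentation ideal models $B\aut_{*,\circ}(X)$ breaks down on the Lie side: $\L W$ has no constants, so every derivation preserves the augmentation ideal and your two purported models coincide, although $B\aut_\circ(X)$ and $B\aut_{*,\circ}(X)$ are in general rationally distinct. Consequently, in the formal case your argument produces negative weights on a dgla that models $B\aut_{*,\circ}(X)$, but it gives no valid model for $B\aut_\circ(X)$ --- which is exactly the Lupton--Smith case the theorem is extending. On the Quillen side the unpointed classifying space is modelled by the larger extension $\Der(\L W)\ltimes s\L W$ with a twisted differential, not by a sub-dgla of $\Der(\L W)$; the weights are still negative there (elements $sx$ with $x$ of word length $k$ sit in degree $\le -2$), but that verification is absent. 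Symmetrically, your Sullivan-side model for the pointed case (derivations with values in $\Lambda^{+}V$, truncated) is classical at the level of homotopy groups but is not the model furnished by the cited literature or by this paper, so it would need a separate justification.

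For comparison, the paper never crosses the pairing: it uses $\Der(\Lambda V)\langle -1\rangle$ and $\Der(\L W)\langle -1\rangle$ with the formality- \emph{or} coformality-induced weights on the same side, and proves negativity only in cohomology, via the weight-graded spectral sequence of the filtration by derivations vanishing on generators of high degree, with $E_1^{t,s}=\Hom(V^{-t},H^{s}(\Lambda V))$ (resp.\ $\Hom(W^{-t},H^{s}(\L W))$); the bound $w<s+t$ (resp.\ $w\le s+t$) comes from Proposition \ref{prop:weight<deg} applied to the generators and to the cohomology, not elementwise --- indeed an elementwise estimate cannot work for the formality-induced weights on $\Lambda V$, where weights may exceed degrees. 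Your elementwise argument does yield a clean, spectral-sequence-free proof of the coformal statement for $B\aut_\circ(X)$ and of the formal statement for $B\aut_{*,\circ}(X)$; to complete the theorem you must either add the $\Der(\L W)\ltimes s\L W$ model (and justify the augmentation-ideal model in the pointed coformal case) with the corresponding weight checks, or fall back on the paper's spectral-sequence argument for the remaining cases.
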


\begin{proof}
Let $\Lambda V$ and $\L W$ be Sullivan minimal and dgla models for $X$, respectively, both endowed with the (co)formality induced weight decomposition. It follows by the theory of Sullivan \cite{sullivan77}, Schlessinger and Stasheff \cite{SchlessingerStasheff} and Tanré \cite{tanre83}, that dgla models for $B\aut_\circ(X)$ and $B\aut_{*,\circ}(X)$ are given by $\Der(\Lambda V)\langle -1\rangle$ and $\Der(\L W)\langle -1\rangle$, respectively, which now have induced weight decompositions (Lemma \ref{lemma:weight-on-derivations}). By Proposition \ref{prop:postive-negative-weights-new}, it is enough to prove that these Lie models have a cohomology concentrated in negative weights in order complete the proof. 

Consider the filtration $0 = F_0 \subseteq F_1\subseteq\cdots$, where  $F_i$ is the space of derivations that vanish on generators of cohomological degree $>i$. In particular,
$$F_i \cong \Hom(V^{\leq i},\Lambda V)$$
is a weight-graded subcomplex of $\Der(\Lambda V)$. This gives rise to a weight-graded spectral sequence $E_*^{*,*}$, that is a spectral sequence where $E_r^{t,s}$ is weight-graded and the differential preserves the weight.
We have that
\[
E_1^{t,s}= \Hom(V^{-t},H^{s}(\Lambda V))\Rightarrow H^{t+s}(\Der(\Lambda V))
\]
(c.f. the proof of \cite[Lemma 3.5]{BM14}).

If $X$ is formal then $V^{-t}= \pi_{t+1}^\Q(\Omega X)^\vee$ is concentrated in weights $\geq -t$ by Proposition \ref{prop:weight<deg} \textit{\ref{item:formal}} (and by taking into account the dualization) and  $H^{s}(\Lambda V)$ is concentrated in weight $s$. Hence $\Hom(V^{-t},H^{s}(\Lambda V))$ is concentrated in weights $<s+t$. In particular $\Hom(V^{-t},H^{s}(\Lambda V))$ has negative weights if $s+t<0$, and consequently $H^*(\Der(\Lambda V)\langle -1\rangle)$ has negative weights.

If $X$ is coformal, then $V^{-t}$ is concentrated in weight $-t-1$ and $H^{s}(\Lambda V)$ is concentrated in weights $<s$ by Proposition \ref{prop:weight<deg} (b), and thus $\Hom(V^{-t},H^s(\Lambda V))$ is concentrated in weights $\leq s+t$. As before, we conclude  that $H^*(\Der(\Lambda V)\langle -1\rangle)$ has negative weights.

For the pointed version, we instead consider the spectral sequence 
$$
E_1^{t,s} = \Hom(W^{-t},H^{s}(\L W))\Rightarrow H^{t+s}(\Der(\L W)),
$$
and use the same type of arguments.
\end{proof}

\begin{exmp}
It is possible to deduce explicit bounds on the weights for the explicit dgla models for $B\aut_\circ(X)$ and $B\aut_{*,\circ}(X)$ when  $X$ is finite dimensional, simply connected and formal or coformal. We will write out these (not necessarily sharp) bounds for the dgla model for $B\aut_\circ(X)$ when $X$ is formal.

We start by observing that if $X$ is formal and simply connected then a minimal dgla model for $X$ is of the form $\L(s\bar H^*(X;\Q)^\vee)$.
In cohomological degree $n<0$ we have that
$$
\L(s\bar H^*(X)^\vee)^{n} = \sum_{\sum_{j=1}^k i_j = -n+k} S_{i_1,\dots,i_k}^n$$
where 
$$S_{i_1,\dots,i_k}^n=[s\bar H^{i_1}(X;\Q)^\vee,[s\bar H^{i_2}(X;\Q)^\vee,\cdots,[s\bar H^{i_{k-1}}(X;\Q)^\vee,s\bar H^{i_k}(X;\Q)^\vee]\cdots].
$$
Since $X$ is formal, $\bar H^{i}(X;\Q)$ is concentrated in weight $i$, and thus an element of $S_{i_1,\dots,i_k}$ is concentrated in weight $\sum_{j=1}^k(- i_j) = n-k$. Since $X$ is simply connected, the maximal value of $k$ for which $S_{i_1,\dots,i_k}^n$ is non-trivial is  $k=-n$ (where all $i_j=2$), which gives the minimal possible weight $w=2n$. The minimal value of $k$ is $1$, and in that case we have that 
\[S_{-n+1}^n = s \bar H^{-n+1}(X;\Q)^\vee\] which is concentrated in weight $(n-1)$. This is the maximal possible weight. From this we conclude that the weights in cohomological degree $n$ of the minimal dgla model for $X$ and its $n$'th cohomology $\pi_{-n}^\Q(\Omega X)$ are concentrated in the interval $[2n,n-1]$.

Hence, we have that 
\[E^{t,s}_1 = \Hom(V^{-t},\bar H^s(\Lambda V)) = \Hom(\pi_{-t-1}^\Q(\Omega X)^\vee, H^s(X;\Q))\] has weights concentrated in the interval $[s+2t+2,s+t]$.

If $X$ is  $d$-dimensional, then $H^*(X)$ is concentrated in cohomological degrees $\leq d$, and thus the spectral sequence \[E^{t,s}_1 = \Hom(V^{-t},H^s(\Lambda V))\]  associated to $\Der(\Lambda V)$ is concentrated in $s$-values satisfying $0\leq s\leq d$. In particular, for every $n\leq 0$, we have that  
$$
\bigoplus_{t+s=n} E^{t,s}_1 = E^{n,0}\oplus E^{n-1,1}\oplus\dots\oplus E^{n-d,d}     
$$
which  has weights concentrated in the interval $[2n-d+2,n]$. \end{exmp}

Theorem \ref{htpautpos} and the above example apply, for instance, to the theory of homotopy automorphisms of compact Kähler manifolds. 
In the case of complex algebraic varieties (possibly singular and/or non-projective), 
one may endow the dgla models for $B\aut_\circ(X)$ and $B\aut_{*,\circ}(X)$
with weight decompositions or, more generally, with mixed Hodge structures,
using the mixed Hodge cdga models for algebraic varieties of \cite{CG1}.
While the weights on the dgla models for homotopy automorphisms will not be negative in general,
the presence of a weight decomposition might still lead to homotopical restrictions in many situations.

\subsection{Free and based  loop spaces} 
In this last section we endow algebraic models for algebraic constructions related to the free and based loop spaces with weights inherited from weight decompositions on the original space.

The singular chains on the based loop space of a topological space $X$, denoted by $C_*(\Omega X)$, defines a dg Hopf algebra. We have that this algebra with rational coefficients is quasi-isomorphic to the universal enveloping algebra $UL$ of a dgla model $L$ for $X$. The universal enveloping algebra functor preserves weights, and so this gives a weight-graded model for $C_*(\Omega X;\Q)$ whenever $X$ has weights. Also, we have:

\begin{prop}
If $X$ has positive weights then $C_*(\Omega X;\mathbb{Q})$ has a dg associative algebra model with negative weights.
\end{prop}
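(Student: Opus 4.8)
The plan is to model $C_*(\Omega X;\mathbb{Q})$ by the universal enveloping algebra of a negatively weighted dgla model of $X$ and to read off the weights by counting tensor factors. Since $X$ has positive weights, Proposition \ref{prop:postive-negative-weights-new} provides a dgla model $L$ for $X$ carrying a negative weight decomposition. As recalled above, $C_*(\Omega X;\mathbb{Q})$ is quasi-isomorphic, as a dg associative algebra, to $UL$ (this holds for the model $L$ at hand, and not only for Quillen's canonical Lie model, because $U$ sends quasi-isomorphisms of connected dgla's over $\mathbb{Q}$ to quasi-isomorphisms). So it suffices to exhibit a negative weight decomposition on $UL$.

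First, $UL$ inherits a weight decomposition from $L$: write $UL=TL/I$ where $I$ is the two-sided ideal generated by $x\otimes y-(-1)^{|x||y|}y\otimes x-[x,y]$, and grade $TL$ by letting the weight of a tensor monomial be the sum of the weights of its factors. These generators are weight-homogeneous because $[-,-]$ preserves weights, so $I$ is a weight-graded ideal and $UL$ becomes a weight-graded algebra. Its product (concatenation) adds weights and its differential, being the derivation extension of the weight-preserving $d_L$, preserves weights; this is the sense in which $U$ preserves weights.

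It remains to check that these weights are negative. The quotient map $TL\twoheadrightarrow UL$ is weight-preserving, so every weight occurring in $(UL)^n$ already occurs in $(TL)^n=\bigoplus_{k\geq 0}(L^{\otimes k})^n$, and it is enough to bound the latter. A homogeneous tensor monomial $x_1\otimes\cdots\otimes x_k$, with the $x_i\in L$ homogeneous, has cohomological degree $\sum_i |x_i|$ and weight $\sum_i w(x_i)$. Since $L$ is connected, $|x_i|\leq 0$ for all $i$; and since its weight decomposition is negative, $w(x_i)=0$ when $|x_i|=0$ and $w(x_i)<0$ when $|x_i|<0$, so $w(x_i)\leq 0$ in every case. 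Hence $UL$ is concentrated in non-positive cohomological degrees; a monomial of degree $0$ has all its factors in $L^0$, hence weight $0$, so $(UL)^0$ has weight $0$; and a monomial of negative degree has some factor $x_j$ with $|x_j|<0$, hence $w(x_j)<0$, so its total weight is $<0$. Therefore the weight decomposition on $UL$ is negative, and $UL$ is a dg associative algebra model for $C_*(\Omega X;\mathbb{Q})$ with negative weights.

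The only real point of care is the first paragraph: making sure the negatively weighted dgla model from Proposition \ref{prop:postive-negative-weights-new} can be used to compute $C_*(\Omega X;\mathbb{Q})$ — i.e., invoking quasi-isomorphism invariance of $U$, and, if $X$ is not simply connected, the appropriately completed form of $C_*(\Omega X;\mathbb{Q})\simeq UL$. The weight bookkeeping in the last two paragraphs is entirely routine.
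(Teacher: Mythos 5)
Your proposal is correct and follows the same route the paper intends (the paper leaves the proof implicit): take a negatively weighted dgla model $L$ via Proposition \ref{prop:postive-negative-weights-new}, use that $UL$ models $C_*(\Omega X;\mathbb{Q})$ and that $U$ preserves weights, and observe that the induced weights on $UL$ are negative. Your explicit bookkeeping on the tensor algebra and the caveat about quasi-isomorphism invariance of $U$ (and completions in the non--simply connected case) are exactly the points the paper glosses over, so nothing is missing.
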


The above applies, for instance, to any smooth algebraic variety.

The free loop space $LX=\map(S^1,X)$ on $X$ endowed with the compact open topology,  has  an obvious $S^1$-action \[g.\varphi(x) = \varphi(g.x)\text{ for every }g\in S^1 \text{ and }\varphi\in LX.\]
We denote the space of $S^1$-homotopy orbits by $LX//S^1$. Both $LX$ and $LX//S^1$ are  spaces of great importance in geometry for several different reasons. For instance, if the Betti numbers of the free loop space $LM$ on a manifold $M$ are unbounded then $M$ admits infinitely many geometrically distinct geodesics \cite{gromollmeyer}. The homology of $LX$ and $LX//S^1$ are also related to the theory of Hoschschild and cylic (co)homology since the (co)homology of $LX$ and $LX//S^1$ can be computed as a certain Hochschild and cyclic (co)homology. The theory of free loop spaces is also connected to homological conformal field theories.

\begin{prop}
Let $X$ be a simply connected space of finite $\Q$-type.
A     weight decomposition on an algebraic model for $X$ induces weight decompositions on the Sullivan minimal models for $LX$ and $LX//S^1$.
\end{prop}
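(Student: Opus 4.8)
The plan is to recall the classical Sullivan models for $LX$ and $LX//S^1$ and then observe that each ingredient in their construction is weight-preserving, so that a weight decomposition on an algebraic model for $X$ is transported functorially to these models. Concretely, fix a weight-graded Sullivan minimal model $\Lambda V$ for $X$ (which exists by Lemma \ref{cdgamodelpositiveweights} and the discussion of Section 2, after transferring from whatever algebraic model carries the weights). The standard Sullivan model for the free loop space is
\[
\Lambda V \ot \Lambda(\bar V),\qquad \bar V \cong sV,
\]
with differential determined by a derivation $S$ that is essentially the unique derivation with $S|_{\Lambda V}=0$ and $S v = \bar v$, composed appropriately with $d$; and the model for $LX//S^1$ is obtained by adjoining one further generator $t$ of cohomological degree $2$ (or $-2$ in the homological convention) together with the Connes-type operator. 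First I would spell out exactly which model I am using (following, e.g., the treatment for free loop spaces via $\Lambda V \ot \Lambda \bar V$ and its $S^1$-equivariant enhancement).

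The key steps, in order: (1) Declare the weight of the shifted generators by $w(\bar v) := w(v)$ for $v \in V$, and $w(t) := 0$ for the equivariant generator; extend multiplicatively to $\Lambda V \ot \Lambda \bar V$ and to $\Lambda V \ot \Lambda\bar V \ot \Lambda(t)$. (2) Check that the differential preserves weights. On $\Lambda V$ this is the hypothesis. On the new generators one must verify that the derivation $S$ implementing $v\mapsto \bar v$ is weight-preserving: since $d$ on $\Lambda V$ preserves weights and $S$ shifts cohomological degree by $-1$ while, by our convention $w(\bar v)=w(v)$, keeps weights fixed, the composite $dS + Sd$ that defines the differential on the free-loop model is weight-preserving. (3) For the homotopy-orbit model, verify similarly that the Connes operator $B$ (the one that appears as the $t$-linear part of the differential) is weight-preserving, using $w(t)=0$: this is the only genuinely new check. (4) Conclude that $\Lambda V \ot \Lambda\bar V$ and its equivariant version are \emph{weight-graded} Sullivan models, and then invoke the weight-graded minimal model theory (Lemma \ref{cdgamodelpositiveweights} together with Proposition \ref{prop:weightgradedHTT}) to pass to weight-graded \emph{minimal} models, which is what the statement asks for. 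If one wants positivity preserved as well (not required by the statement as phrased, but morally the point), one notes $w(\bar v)=w(v)>0$ for $v$ in positive degree, so positivity of the weight decomposition on $\Lambda V$ is inherited; this follows the same pattern as Lemma \ref{cdgamodelpositiveweights}.

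The main obstacle is step (3): one must make sure that the specific operator used to model the $S^1$-action — the derivation of degree $-1$ that is the "dual" of multiplication by the fundamental class of $S^1$, equivalently the piece $\iota$ with $d_{\mathrm{equiv}} = d + t\,\iota$ (or the Connes $B$-operator) — genuinely preserves weights once we set $w(t)=0$. Since $\iota$ is built from the identification $\bar V \cong sV$ by a shift in cohomological degree only, it does not touch weights, so $t\,\iota$ has the same weight behaviour as $\iota$, namely weight $0$; hence $d_{\mathrm{equiv}}$ is weight-preserving. The remaining verifications (that the weight decomposition is compatible with products — immediate, since it is defined multiplicatively on free generators — and that the quasi-isomorphisms to $\Aa_{pl}(LX)$ and $\Aa_{pl}(LX//S^1)$ can be taken weight-preserving, which follows from functoriality of the constructions applied to the weight-preserving quasi-isomorphism $\Lambda V \to \Aa_{pl}(X)$) are routine and I would not grind through them.
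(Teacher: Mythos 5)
Your proposal is correct and follows essentially the same route as the paper: both use the Vigu\'e-Poirrier--Sullivan model $\Lambda(V\oplus s^{-1}V)$ for $LX$ and its equivariant extension by a degree-two generator for $LX//S^1$, assign weights by $w(s^{-1}v)=w(v)$ and weight zero to the new degree-two generator, and observe that the differentials, being built from the weight-preserving $d$ and the shift derivation, preserve weights. The only cosmetic difference is that you explicitly invoke the weight-graded minimal model theory at the end, whereas the paper relies on these models already being (weight-graded) minimal when $\Lambda V$ is minimal and $X$ is simply connected.
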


\begin{proof} 
We will use the models for $LX$ and $LX//S^1$ constructed in \cite{burghelea-viguepoirrer}.
Let $(\Lambda V, d)$ be a Sullivan minimal model for $X$. In order to describe these models we need to set some notation. Let $\beta\in\Der(\Lambda(V\oplus s^{-1} V))$ be the unique derivation that satisfies $\beta(v)= s^{-1} v$ and $\beta(s^{-1} v)=0$. 
A Sullivan model for $LX$ is given by \[(\Lambda(V\oplus s^{-1} V), \delta)\] where $\delta$ is the unique derivation that satisfies $\delta(v)=dv$, $\delta(s^{-1} v) = -\beta dv$.
Likewise,
a model for $LX//S^1$ is given by
the algebra \[(\Lambda(V\oplus s^{-1} V\oplus \field\alpha), \mathscr D)\] where $|\alpha|=2$ and where $\mathscr D$ is the unique derivation satisfying 
\[\mathscr D(\alpha)=0,\, \mathscr D(v)= dv+\alpha\cdot s^{-1}v,\, \mathscr D(s^{-1}v)=-\beta dv\text{ for every } v\in V.\]
Given a weight decomposition for $X$, then it admits a weight graded minimal model, and hence, the above models for $LX$ and $LX//S^1$  inherit  weight decompositions, with $w(s^{-1}v)= w(v)$ and $w(\alpha)=0$.
\end{proof}

\subsection{Minimal Lie models for nilpotent spaces with positive weights}\label{existenceminimalmodels}

We end this section by proving that  nilpotent spaces 
with positive cdga models
admit minimal dgla models. In \cite{neisendorfer}, Neisendorfer proves that the rational homotopy type of a finite type nilpotent space $X$ can be modelled by a dgla $\mathscr L(\Lambda V)$, which extends the dgla models of Quillen \cite{quillenRHT} for simply connected spaces. The Neisendorfer model $\mathscr L(\Lambda V)$ is however not minimal, and Neisendorfer conjectured that minimal models for nilpotent spaces exist.

A proof of the completed version of the Neisendorfer conjecture can be found in \cite[Proposition 3.16]{bfmt}. Moreover, one can also deduce the following:

\begin{lemma}
    Given a dgla $(\widehat\L W,d)$ with a degree-wise nilpotent homology such that $d(W)\subset \L W\subseteq \widehat \L W$, then the completion map $(\L W,d)\to(\widehat \L W,d))$ is a quasi-isomorphism.
\end{lemma}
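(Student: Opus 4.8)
The plan is to compare the two complexes through the bracket-length filtration. Since $d$ is a derivation and $d(W)\subseteq\L W$, we have $d(\L W)\subseteq\L W$, so $(\L W,d)$ is a sub-dgla of $(\widehat\L W,d)$ and the completion map is a chain map; write $Q$ for its cokernel, so that the statement is equivalent to $H_*(Q)=0$. Filter $\widehat\L W$ by $\widehat F^p=\prod_{k\ge p}\L^kW$ and $\L W$ by $F^p=\bigoplus_{k\ge p}\L^kW=\widehat F^p\cap\L W$. The key point is that $d$ preserves these filtrations: a bracket of $k$ elements of $W$ is sent by the Leibniz rule to a sum of brackets in which one factor $w$ is replaced by $d(w)\in\L W$, and $d(w)$ has bracket-length $\ge1$, so the result has bracket-length $\ge k$. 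Thus both filtrations are by subcomplexes and the completion map is filtration-preserving.

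The associated graded of both filtered complexes is $\bigoplus_{k\ge1}\L^kW$ equipped with the differential induced by the bracket-length-preserving part of $d$, and the completion map induces the identity there. Hence it is an isomorphism on the $E_0$-pages of the two associated spectral sequences, and therefore on every $E_r$-page. It remains to see that both spectral sequences converge to the respective homologies. For $\widehat\L W$ the filtration $\widehat F^\bullet$ is exhaustive, Hausdorff and complete, with $\widehat\L W/\widehat F^p=\L W/F^p$ finite-dimensional in each cohomological degree (assuming $W$ of finite type), so the towers $\{H^n(\widehat\L W/\widehat F^p)\}_p$ are Mittag--Leffler and $H^n(\widehat\L W)\cong\varprojlim_p H^n(\widehat\L W/\widehat F^p)$; here degree-wise nilpotency of $H_*(\widehat\L W)$ enters to show that the induced filtration $\widehat F^\bullet H^n(\widehat\L W)$ is \emph{finite} for every $n$ --- one relates $\widehat F^pH_*(\widehat\L W)$ to the lower central series $\Gamma^pH_*(\widehat\L W)$, uses that the induced filtration on homology is Hausdorff (finite-type associated graded), and concludes $\widehat F^pH^n(\widehat\L W)=0$ for $p\gg0$ from $\Gamma^pH^n=0$ for $p\gg0$. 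Since $\L W/F^p=\widehat\L W/\widehat F^p$, the analogous statement for $\L W$ reduces to showing $H^n(\L W)\to\varprojlim_p H^n(\L W/F^p)$ is an isomorphism; its kernel is $\bigcap_pF^pH^n(\L W)$, which vanishes by the finiteness just obtained, and surjectivity follows by comparing coherent families through $\widehat\L W$ and truncating a representative cycle. Combining, the completion map is a quasi-isomorphism; equivalently $H_*(Q)=0$.

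The main obstacle is exactly this convergence step. Where $W$ is concentrated in strictly negative cohomological degrees the bracket-length filtration is already finite degree-wise, and in fact $\L W=\widehat\L W$ there, so the only real content sits in cohomological degree $0$ (coming from $\pi_1$): the free Lie algebra on the degree-$0$ generators contributes in every bracket length, so $\L W\subsetneq\widehat\L W$ and one cannot argue degree-by-degree. The role of assuming degree-wise nilpotency \emph{of the homology} (as opposed to of the chains, which would make the statement trivial) is precisely to force the induced filtration on homology --- unlike the filtration on the chains --- to be finite in each degree, which is what makes the two spectral sequences converge to isomorphic homologies.
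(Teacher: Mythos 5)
Your route (comparing $(\L W,d)$ and $(\widehat\L W,d)$ via the bracket-length filtration and the induced spectral sequences) is genuinely different from the paper's, which simply transfers degree-wise nilpotency from $H^*(\widehat\L W)$ to $H^*(\L W)$ via the inclusion $H^*(\L W)\subseteq H^*(\widehat\L W)$ and then invokes \cite[Proposition 3.30]{bfmt}. Unfortunately, your argument has a genuine gap exactly at the point you yourself flag as the main obstacle. The crucial claim is that degree-wise nilpotency of $H^*(\widehat\L W)$ forces the filtration induced on homology by $\widehat F^p=\prod_{k\geq p}\L^kW$ to be \emph{finite} in each degree, i.e.\ $\widehat F^pH^n(\widehat\L W)=0$ for $p\gg 0$. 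You assert this by ``relating'' $\widehat F^pH^*$ to the lower central series $\Gamma^pH^*$, but the only containment that comes for free is $\Gamma^pH^*\subseteq \widehat F^pH^*$ (a bracket of $p$ homology classes is represented by a bracket of $p$ cycles, each of filtration $\geq 1$), and this goes in the wrong direction: $\Gamma^pH^n=0$ for $p\gg 0$ does not by itself bound $\widehat F^pH^n$. Hausdorffness of the induced filtration (which you do get from the Mittag--Leffler argument in the finite-type case) is much weaker than the finiteness you need. Controlling the chain-level bracket-length filtration on homology by the lower central series of the homology is precisely the nontrivial content that the paper outsources to \cite{bfmt}; it cannot be waved through in one sentence.

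There is a second, related problem on the uncompleted side. You deduce $\bigcap_pF^pH^n(\L W)=0$ ``by the finiteness just obtained,'' but that finiteness concerns $\widehat F^pH^n(\widehat\L W)$; to transfer it you need injectivity of $H^n(\L W)\to H^n(\widehat\L W)$, which is part of what you are trying to prove --- the argument is circular as written. Likewise, surjectivity of $H^n(\L W)\to\varprojlim_pH^n(\L W/F^p)$ cannot be obtained by ``truncating a representative cycle'': truncating a cycle of $\widehat\L W$ only produces a coherent family of classes in the quotients $\L W/F^p$ (since $dz_{<p}\in F^p$), not an honest cycle of $\L W$ representing it; the failure of such liftings is exactly the difference between $\L W$ and its completion. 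Finally, note that you quietly add the hypothesis that $W$ is of finite type (to kill the $\varprojlim^1$ term), which the lemma does not assume. If you want a self-contained proof along these lines, the missing ingredient is a genuine argument that degree-wise nilpotent homology makes the bracket-length filtration on $H^*(\widehat\L W)$ degree-wise eventually zero (or some equivalent completeness statement for homology); otherwise the efficient fix is the paper's: observe that $H^*(\L W)$ injects into the degree-wise nilpotent $H^*(\widehat\L W)$, hence is itself degree-wise nilpotent, and apply \cite[Proposition 3.30]{bfmt} directly.
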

\begin{proof}
It is proved in \cite[Proposition 3.30]{bfmt} that if $(\L W,d)$ has a degree-wise nilpotent homology, then the completion map $(\L W,d)\to (\widehat\L W,d)$ is a quasi-isomorphism. 
If  $H^*(\widehat\L W,d)$ is degree-wide nilpotent, then $H^*(\L W,d)$ is also degree-wise nilpotent since 
\[H^*(\L W,d)\subseteq H^*(\widehat\L W,d).\] Hence the completion map is a quasi-isomorphism.\end{proof}

\begin{thm}\label{thm:neisendorfer}
    Let $X$ be a nilpotent space of finite type that has a cdga model with a positive weight decomposition. Then $X$ has a minimal dgla model. Moreover, the minimal model inherits a weight decomposition from the cdga model.
\end{thm}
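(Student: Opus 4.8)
The plan is to produce a minimal dgla model by running the weight-graded minimal model machinery on the Lie side, exploiting the fact that positive weights force the relevant filtrations to be finite in each degree. First I would start from a connected cdga model $A$ of $X$ with a positive weight decomposition; by Lemma \ref{cdgamodelpositiveweights} we may assume $A = \Lambda V$ is a weight-graded minimal Sullivan model with positive weights, and since $X$ is nilpotent of finite type we may take $V$ of finite type. Applying the Quillen functor $\mathscr L$ of Definition \ref{dualizing}, we obtain the Neisendorfer-type dgla model $\mathscr L(\Lambda V)$, which by the discussion preceding Proposition \ref{prop:weights-on-indec-new} is a (generally non-minimal, quasi-free) dgla model for $X$, and which carries a weight decomposition by the construction in Section \ref{subsec:CE-Quillen-constr}. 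The key numerical input is that the positivity of the weights on $\Lambda V$ translates, under $\mathscr L$, into \emph{negativity} of the weights on $\mathscr L(\Lambda V)$ away from degree zero, and hence on $H^*(\mathscr L(\Lambda V)) \cong s^{-1}H^*(V)^\vee \cong \pi_{-*}^\Q(\Omega X)$ (Proposition \ref{prop:weights-on-indec-new}\ref{item:indecone}); in particular in each fixed cohomological degree only finitely many weights occur.

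The core of the argument is then an inductive construction of the minimal dgla model over the weight grading, paralleling the cdga proof of Lemma \ref{cdgamodelpositiveweights}. Concretely, I would build a weight-graded quasi-free dgla $(\L W, d)$ together with a weight-preserving quasi-isomorphism $(\L W,d) \to \mathscr L(\Lambda V)$, attaching generators in order of increasing cohomological degree and, within a fixed degree, handling one weight at a time. Because the weights on the target are negative and bounded in each degree, at each stage only finitely many cells of a given bidegree need to be attached, and the differential of a newly attached generator lands in the sub-dgla already constructed (this is where $d(W)\subseteq \L W$, needed for the completion lemma, comes out of the construction automatically). The resulting $(\L W,d)$ is minimal in the sense that $d(W) \subseteq [\L W, \L W]$, i.e.\ the differential has no linear part, since generators are only ever attached to kill or create cohomology, never to cancel a differential.

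The remaining point is to check that this $\L W$ is genuinely a dgla model for $X$, not merely for its completion. Here I would invoke the lemma stated just before Theorem \ref{thm:neisendorfer}: since $H^*(\L W, d) \cong \pi_{-*}^\Q(\Omega X)$ is degree-wise nilpotent (as $X$ is nilpotent of finite type) and $d(W) \subseteq \L W \subseteq \widehat{\L W}$ by construction, the completion map $(\L W, d) \to (\widehat{\L W}, d)$ is a quasi-isomorphism; combined with the fact that $(\widehat{\L W},d)$ is the completed minimal model of $X$ furnished by \cite[Proposition 3.16]{bfmt}, this identifies $(\L W,d)$ as a genuine (uncompleted) minimal dgla model. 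Finally, the weight decomposition on $\L W$ is inherited by construction, proving the last sentence of the theorem; its uniqueness up to weight-graded isomorphism follows from the weight-graded homotopy transfer statement of Proposition \ref{prop:weightgradedHTT}\ref{item:HTT-minimal-model} applied on the Lie side (or from lifting the usual uniqueness argument for minimal models to the weight-graded category, as in Lemma \ref{cdgamodelpositiveweights}).

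The main obstacle I anticipate is verifying that the naive degree-by-degree, weight-by-weight attachment really terminates in each cohomological degree and produces a differential with the stated containment property: one must rule out the possibility that killing a cohomology class of a given weight forces the introduction of generators of that same or higher degree with weights outside the finite window dictated by positivity, which would break the induction. Positivity of the cdga weights — equivalently negativity of the Lie weights — is exactly the hypothesis that closes this gap, but making the bookkeeping precise (keeping track of which bidegrees $W^n_p$ can be non-zero, and that $\L W$ in each bidegree is finite-dimensional) is the technical heart of the proof.
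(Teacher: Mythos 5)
Your route is genuinely different from the paper's, and it has a gap precisely at the point you yourself flag as ``the technical heart.'' You propose to build the minimal model by the classical inductive cell-attachment over $\mathscr L(\Lambda V)$, degree by degree and weight by weight. But for connected, non-simply-connected dgla's this inductive construction is exactly what is \emph{not} known to work --- its failure (or unknown status) is the content of the Neisendorfer conjecture, of which only the completed version is proved in \cite[Proposition 3.16]{bfmt}. Saying that positivity of weights, hence finitely many weights in each cohomological degree, ``closes the gap'' is an assertion, not an argument: with generators in degree $0$ the free Lie algebra $\L W$ is infinite-dimensional in each cohomological degree, and you never explain why the attachment process converges, why the limit map remains a quasi-isomorphism, or why minimality can be arranged. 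The mechanism that actually makes positive weights decisive is never identified in your sketch: on the Lie side all generators have \emph{negative} weight, so an element of weight $-m$ is a bracket of at most $m$ generators, i.e.\ the weight bounds the word length. This is what the paper exploits, and without it ``finitely many weights per degree'' does not by itself control the induction. A related confusion: if you really build an uncompleted $(\L W,d)$ with a quasi-isomorphism to $\mathscr L(\Lambda V)$, then $d(W)\subseteq\L W$ is automatic and the completion lemma is not what you need; conversely, invoking that lemma together with the completed model of \cite{bfmt} presupposes that your inductively produced $W$ agrees with the generators of the completed minimal model, which you have not established.

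For contrast, the paper's proof avoids any inductive attachment. It takes a weight-graded \emph{minimal $C_\infty$-model} $(H,\{\mu_i\})$ of the positively weighted cdga model (Proposition \ref{prop:weightgradedHTT}), applies the functor $\mathscr E$ of \cite{bfmt} and dualizes to get a \emph{completed} quasi-free dgla $(\widehat\L(s\overline H^\vee),d_1+d_2+\cdots)$ whose generators are fixed once and for all and whose differential is decomposable by minimality of the $C_\infty$-structure; the only issue is that $d$ is a priori an infinite sum. Positivity of the weights on $H$ forces word-length-$n$ elements to have weight at least $n$, so on a generator of weight $-m$ all components $d_i$ with $i\geq m$ vanish; hence $d$ restricts to the uncompleted $\L(s\overline H^\vee)$, and the completion lemma (applicable since the homology is degree-wise nilpotent and $d(W)\subset\L W$ has just been verified) identifies the uncompleted object as a genuine minimal dgla model. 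If you want to salvage your inductive approach, the same word-length-versus-weight bound is the ingredient you would have to build your bookkeeping around; as written, the proposal defers exactly the step where the theorem's difficulty lies.
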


\begin{proof}
Following \cite[§ 2]{bfmt} there is a functor $\mathscr E$ from the category of $C_\infty$-algebras with $\infty$-$C_\infty$-morphisms to the category of quasi-free dg Lie coalgebras and dg Lie coalgebra morphisms, that defines an equivalence of categories and that preserves quasi-isomorphisms. 
We briefly explain the functor $\mathscr E$: 
Let $\L^c(V)$ be a free graded Lie coalgebra  cogenerated by a finite type graded vector space $V$.  The dual $(\L^c(V))^\vee$ is given by the completed graded Lie algebra $\widehat \L(V^\vee)$

Given a unital $C_\infty$-algebra $(A,\{\mu_i\})$ we have that
$$
\mathscr E(A,\{\mu_i\}) = (\L^c(s^{-1}\overline A),\delta)
$$
where the differential decomposes as $\delta = \delta_0+\delta_1+\dots$ where $\delta_i$ is the part that decreases the word length by $i$ and where $\delta_i$ is completely encoded by $\mu_{i+1}$.

It is straightforward to show that if $(A,\{\mu_i\})$ is a  weight-graded $C_\infty$-algebra, then $\mathscr E(A,\{\mu_i\})$ is a weight-graded dg Lie coalgebra (there is a natural weight grading on $\L^c(s^{-1}\overline A)$, and we will have that $\delta_i$ preserves the weights whenever $\mu_{i+1}$ preserves the weights). Similarly one shows for any weight preserving $\infty$-$C_\infty$-morphism $g\colon A\rightsquigarrow B$, the coalgebra morphism $\mathscr E(g)$ is weight preserving. 

If $A$ is a finite type cdga, then  the dual $\mathscr E(A)^\vee$ has the structure of a dgla whose underlying graded Lie algebra structure $\widehat\L(s \overline A^\vee)$ is given by the completion of the free graded Lie algebra $\L(s \overline A^\vee)$  with respect to the bracket length.  If $A$ is a finite type $C_\infty$-algebra model for $X$, then it follows by the theory established in \cite{bfmt} that $\mathscr E(A)^\vee$ is a completed dgla model for $X$.

Now, let $(H,\{\mu_i\})$ be a weight graded minimal $C_\infty$-algebra model for the positively graded cdga model for $X$. We have \[\mathscr E(H,\{\mu_i\}) = (\L^c(s^{-1}\overline H),\delta_1+\delta_2+\dots).\]

Since $H$ is positively graded, it follows that elements of word-length $n$ in $\L^c(s^{-1} \overline H)$, i.e. elements of $\mathscr Lie(n)^\vee\ot(s^{-1}\overline H)^{\ot n}$, are of  weight at least $n$. Since $\delta_n$ vanishes on elements of word-length $<n+1$, we conclude that every non-trivial element in the image of $\delta_n$ is of at least weight $n+1$. Hence if $y\in \L^c(s^{-1} \overline H)$ and $w(y)=m$, then $y\not\in \im(\delta_i)$ for $i\geq m$

Dualizing the Lie coalgebra  $\mathcal E(H,\{\mu_i\})$  gives a complete dgla 
$$(\widehat\L(s \overline H^\vee),d_1+d_2+\dots)$$ where $d_i$ is the part of the differential that increases the bracket length by $i$.

For an element $y\in s \overline H^\vee$ of weight $w(y)=-m$, we have that $d_i(y)=0$ for every $i\geq m$.

In particular, we have that \[d(y) = d_1(y)+\dots+d_{m-1}(y)\in \L(sH^\vee)\subset \widehat\L(sH^\vee),\] and thus $\delta(s\overline H^\vee)\subset\L(s\overline H^\vee)\subset \widehat\L(s\overline H^\vee)$. Thus $\L(s\overline H^\vee,d)$ defines a minimal dgla model for $X$. 
\end{proof}

\bibliographystyle{amsalpha}
\bibliography{references}
\end{document}